\numberwithin{equation}{section}
\title{\bf Three-Level Multi-Leader-Follower Incentive Stackelberg Differential Game with $H_\infty$ Constraint \thanks{This work is supported by National Key R\&D Program of China (2022YFA1006104), National Natural Science Foundations of China (12471419, 12271304), and Shandong Provincial Natural Science Foundations (ZR2024ZD35, ZR2022JQ01).}}
\author{\normalsize  Na Xiang\thanks{\it School of Mathematics, Shandong University, Jinan 250100, P.R. China, E-mail: 202211967@mail.sdu.edu.cn} , Jingtao Shi\thanks{\it Corresponding author. School of Mathematics, Shandong University, Jinan 250100, P.R. China, E-mail: shijingtao@sdu.edu.cn}}
\date{}
\newtheorem{mypro}{Proposition}[section]
\newtheorem{mythm}{Theorem}[section]
\newtheorem{mydef}{Definition}[section]
\newtheorem{mylem}{Lemma}[section]
\newtheorem{Remark}{Remark}[section]
\newtheorem{Example}{Example}[section]
\begin{document}

\maketitle
	
\noindent{\bf Abstract:}\quad This paper is concerned with a three-level multi-leader-follower incentive Stackelberg game with $H_\infty$ constraint. Based on $H_2/H_\infty$ control theory, we firstly obtain the worst-case disturbance and the team-optimal strategy by dealing with a nonzero-sum stochastic differential game. The main objective is to establish an incentive Stackelberg strategy set of the three-level hierarchy in which the whole system achieves the top leader's team-optimal solution and attenuates the external disturbance under $H_\infty$ constraint. On the other hand, followers on the bottom two levels in turn attain their state feedback Nash equilibrium, ensuring incentive Stackelberg strategies while considering the worst-case disturbance. By convex analysis theory,  maximum principle and decoupling technique, the three-level incentive Stackelberg strategy set is obtained. Finally, a numerical example is given to illustrate the existence of the proposed strategy set.

\vspace{2mm}
	
\noindent{\bf Keywords:}\quad Incentive Stackelberg differential game, $H_\infty$ constraint, $H_2/H_\infty$ control, Nash equilibrium, team-optimal solution
	
\vspace{2mm}
	
\noindent{\bf Mathematics Subject Classification:}\quad 91A15, 91A65, 93E20
	
\section{Introduction}

The Stackelberg game, pioneered by von Stackelberg \cite{von Stackelberg52} in 1952, has important applications in various fields, such as economics, engineering management and computer science, etc.
Bagchi and Ba\c{s}ar \cite{Bagchi-Basar81} initially considered {\it stochastic linear-quadratic} (SLQ) nonzero-sum Stackelberg differential game and established existence and uniqueness of the Stackelberg solution, where the diffusion term of the state equation did not contain the state and control variables.
Yong \cite{Yong02} discussed the open-loop solution of SLQ nonzero-sum Stackelberg differential game, where the coefficients of the system  with both state-dependent and control-dependent noise are random, and the weight matrices for the controls in the cost functionals are not necessarily positive definite, and obtained the feedback representation of the open-loop equilibrium via a new stochastic Riccati equation. In recent two decades, stochastic Stackelberg differential game has been extensively investigated and there has been a great deal of literature around it. Bensoussan et al. \cite{Bensoussan-Chen-Sethi15} derived the maximum principle for the stochastic Stackelberg differential game with the control-independent diffusion term under different information structures.
Shi et al. \cite{Shi-Wang-Xiong16, Shi-Wang-Xiong17} studied SLQ Stackelberg differential games with asymmetric information.
Li et al. \cite{Li et al21} discussed the SLQ Stackelberg differential game under asymmetric information by a layered calculation method.
Li and Yu \cite{Li-Yu18} considered an SLQ generalized Stackelberg game with the multilevel hierarchy. Moon and Ba\c{s}ar \cite{Moon-Basar18} studied SLQ Stackelberg {\it mean field game} (MFG) with the adapted open-loop information structure of the leader, where one leader and arbitrarily large number of followers was considered, and solved by fixed-point approach, while Wang \cite{Wang24} tackled it by a direct approach.
Zheng and Shi \cite{Zheng-Shi22} discussed a Stackelberg stochastic differential game with asymmetric noisy observations.
Kang and Shi \cite{Kang-Shi22} studied a three-level SLQ Stackelberg differential game with asymmetric information.
Feng et al. \cite{Feng-Hu-Huang24} established a unified two-person differential decision setup, and studied the relationships between zero-sum SLQ Nash and Stackelberg differential game, local versus global information.
Wang and Wang \cite{Wang-Wang24} considered SLQ mean-field type partially observed Stackelberg differential game with two followers.
Li and Shi \cite{Li-Shi24} studied closed-loop solvability of SLQ mean-field type Stackelberg differential games.
	
In the Stackelberg game, the leader's strategy can induce the decision or action of the followers such that the leader's team-optimal solution can be achieved, which is called the {\it incentive} Stackelberg game, and this has been extensively studied for over 40 years.
Ho et al. \cite{Ho-Luh-Olsder82} investigated the deterministic and stochastic versions of the  incentive problem, and their relationship to economic literature is discussed.
Zheng and Ba\c{s}ar \cite{Zheng-Basar82} studied the existence and derivation of optimal affine incentive schemes for Stackelberg games with partial information by a geometric approach.
Zheng et al. \cite{Zheng-Basar-Cruz84} discussed applicability and appropriateness of a function-space approach in the derivation of causal real-time implementable optimal incentive Stackelberg strategies under various information patterns.
Mizukami and Wu \cite{Mizukami-Wu87,Mizukami-Wu88} considered the derivation of the sufficient conditions for the LQ incentive Stackelberg game with multi-players in a two-level hierarchy.
Ishida and Shimemura \cite{Ishida-Shimemura83} investigated the three-level incentive Stackelberg strategy in a nonlinear differential game, and derived the sufficient condition for a linear quadratic differential game, while Ishida \cite{Ishida87} considered different incentive strategies.
Li et al. \cite{Li-Cruz-Simaan02} studied the team-optimal state feedback Stackelberg strategy of a class of discrete-time two-person nonzero-sum LQ dynamic games.
Mukaidani et al. \cite{Mukaidani-Xu-Shima-Dragan17} discussed the incentive Stackelberg game for a class of Markov jump linear stochastic systems with multiple leaders and followers.
Lin et al. \cite{Lin-Gao-Zhang22} investigated the incentive feedback strategy for a class of stochastic Stackelberg games in finite horizon and infinite horizon.
Gao et al. \cite{Gao-Lin-Zhang23} considered the incentive feedback Stackelberg strategy for the discrete-time stochastic systems, then the incentive Stackelberg strategy for the discrete-time stochastic systems with mean-field terms is discussed in \cite{Gao-Lin-Zhang24}.
	
The incentive Stackelberg game is often combined with robust control theory. The incentive Stackelberg games under the $H_\infty$ constraint is based on $H_2/H_\infty$ control theory to solve.
Ahmed and Mukaidani \cite{Ahmed-Mukaidani16} studied  the incentive Stackelberg game for a class of  deterministic discrete-time system with a deterministic external disturbance.
For stochastic version, Mukaidani et al. \cite{Mukaidani-Xu-Dragan18} considered the incentive Stackelberg game with one leader and one follower subject to external disturbance by means of static output-feedback.
\cite{Mukaidani-Shima-Unno-Xu-Dragan17,Ahmed-Mukaidani-Shima17,Mukaidani-Xu18,Kawakami-Mukaidani-Xu-Tanaka18,Mukaidani-Xu19} discussed the incentive Stackelberg game with one leader and multiple non-cooperative followers subjected to the $H_\infty$ constraint.
Ahmed et al. \cite{Ahmed-Mukaidani-Shima17'} investigated multi-leader-follower incentive Stackelberg games for SLQ systems with $H_\infty$ constraint.
Mukaidani et al. \cite{Mukaidani-Xu-Shima-Ahmed18} considered the incentive Stackelberg game for a class of Markov jump SLQ systems with multi-leader-follower under $H_\infty$ constraint, where followers attain their state feedback Nash equilibrium and Pareto optimality.
Mukaidani et al. \cite{Mukaidani-Saravanakumar-Xu20} studied a robust static output feedback incentive Stackelberg game for a Markov jump SLQ system with multi-leader-follower, where the Pareto optimal strategies of the followers as cooperative strategy is chosen.
Mukaidani et al. \cite{Mukaidani-Irie-Xu-Zhang22} considered the static output feedback strategy for robust incentive Stackelberg games with a large population for mean-field stochastic systems.
	
In this paper, we study a three-level multi-leader-follower incentive Stackelberg game with $H_\infty$ constraint, which has practical significance, especially in corporate governance. The first level is specified as the Decision-making Level 1 with one person; the second level is lower than the first level and is specified as the Managerial Level 2 with two people; the third level is the lowest level of the whole system and is specified as the Executive Level 3 with three people (see Figure 1.(a)).
\begin{figure}[htbp]
\centering
\subfigure[]{
	\begin{minipage}[t]{0.5\linewidth}
		\centering
		\includegraphics[width=3in]{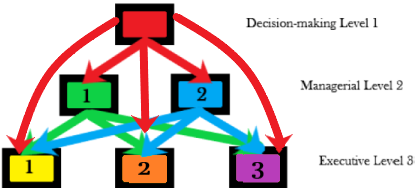}
	\end{minipage}%
}%
\subfigure[]{
	\begin{minipage}[t]{0.5\linewidth}
		\centering
		\includegraphics[width=2.15in]{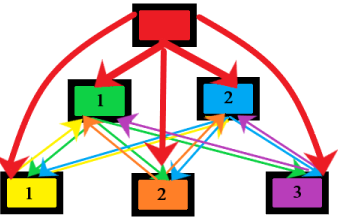}
	\end{minipage}%
}%
\centering
\caption{The three-level incentive Stackelberg game. }
\end{figure}
The arrows in the figure 1.(a) indicate the leader's incentives for followers. Since Decision-making Level 1 is in the highest position, who can incentivize each player in Managerial Level 2 and Executive Level 3 accordingly. Two people in Managerial Level 2 can only make incentives to the three people in the next level (Executive Level 3). For three people of Executive Level 3, they can only respond to the incentives of all the leaders (Decision-making Level 1 and Managerial Level 2). Figure 1.(b) depicts the incentives and responses of the three-level Stackelberg game system in detail.

Besides, due to the limited abilities and complicated information environment, the whole company doesn't have access to full information about the market. So it is quite natural to consider information uncertainty. Based on this, the drift term of the state equation contains the external stochastic disturbance $v(\cdot)$, which is unknown to all players in the three-level incentive Stackelberg game system. We address incentive and model uncertainty primarily through  $H_2/H_\infty$ control, viewing the disturbance as a new player and finding a Nash equilibrium solution based on the definition of team-optimal strategy. Then, according to the worst-case disturbance, the incentives between each level are carried out, and finally the team-optimal strategy of the game is reached. Our main contributions are summarized as follows.
\begin{itemize}
\item[$\bullet$] We investigate a class of three-level multi-leader-follower Stackelberg games with $H_\infty$ constraint, where the control variables and the external disturbance enter the diffusion term and drift term of the state equation, respectively.
\item[$\bullet$] We derive sufficient conditions for the three-level incentive Stackelberg game using information on follower's strategies, and prove that
three-level incentive matrices depend on an initial state value $x_0$.
\item[$\bullet$] After deriving the state feedback representation of the worst-case disturbance, we design its closed-loop form. Based on this, we find that the corresponding trajectory (i.e., $\tilde{x}(\cdot)$, $\hat{x}(\cdot)$) is equivalent to the optimal-team trajectory $\bar{x}(\cdot)$ after achieving incentive, which shows that the disturbance designed in advance is the worst-case disturbance $v^*(\cdot)$ at this time. Using the closed-loop form of the disturbance instead of the worst-case disturbance can avoid some complications, i.e., dealing with an SLQ optimal control problem in an augmented space or introducing inhomogeneous terms and linear terms of the state process and control processes into the state equation and cost functional, respectively.
\item[$\bullet$] Via convex optimization approach, we can achieve the open-loop Nash equilibrium for Managerial Level 2 and Executive Level 3 with corresponding incentives from leaders.
\item[$\bullet$] By $H_2/H_\infty$ control theory, the team-optimal strategy and the worst-case disturbance are derived at the same time, based on which the three-level incentive Stackelberg strategy is obtained.
\end{itemize}

The rest of this paper is organized as follows. Section 2 introduces some preliminary notations and formulations of the team-optimal solution and the finite horizon stochastic $H_2/H_\infty$ control problem. Section 3 discusses incentive Stackelberg equilibrium strategies. In Section 4, an algorithm procedure and a numerical example are used to elaborate the effectiveness of the proposed results. Section 5 concludes the paper.	
	
\section{Preliminaries}
	
Let ($\Omega,\mathcal{F},\mathbb{F},\mathbb{P}$) be a complete filtered probability space on which a standard one-dimensional Brownian motion $W=\left\lbrace W(t),0\leq t<\infty\right\rbrace$ is defined, where $\mathbb{F}=\left\lbrace\mathcal{F}_t\right\rbrace_{t\geq0}$ is the natural filtration of $W\left(\cdot\right)$ augmented by all the $\mathbb{P}$-null sets in $\mathcal{F}$. Let $\mathbb{R}^n$ denote the \emph{n}-dimensional Euclidean space with standard Euclidean norm $\left|\cdot\right| $ and standard Euclidean inner product $\left\langle\cdot,\cdot\right\rangle $. The transpose of a vector (or matrix) $x$ is denoted by $\mathbf{\emph{x}}^\top$. $\mbox{Tr}(A)$ denotes the trace of a square matrix $A$. Let $\mathbb{R}^{n\times m}$ be the Hilbert space consisting of all $n\times m$-matrices with the inner product $\left\langle A,B\right\rangle := \mbox{Tr}(AB^\top$) and the norm $\Vert A \Vert:=\langle A,A\rangle^\frac{1}{2}$. Denote the set of symmetric $n\times n$ matrices with real elements by $\mathbb{S}^n$. If $M\in\mathbb{S}^n$ is positive (semi-) definite, we write $M > (\geq) 0$. If there exists a constant $\delta>0$ such that $M\geq\delta I$, we write $M\gg0$.	
	
Consider a finite time horizon $[0,T]$ for a fixed $T>0$. Let $\mathbb{H}$ be a given Hilbert space, we denote
\begin{equation*}
\begin{aligned}
	L_{\mathbb{F}}^2(0,T;\mathbb{H})&:=\biggl\{\phi :[0,T]\times\Omega \mapsto \mathbb{H}\,\Big|\,\phi\mbox{ is }\mathbb{F}\mbox{-progressively measurable, such that}\\
    &\qquad \left( \mathbb{E}\int_0^T |\phi(s)|^2 ds\right) ^\frac{1}{2} < +\infty \biggr\}.
\end{aligned}
\end{equation*}
		
We consider the following controlled linear {\it stochastic differential equation} (SDE):
\begin{equation}\label{state}
\left\{
\begin{aligned}
	\mathrm{d}x(t)=&\Bigg[ A(t)x(t)+\sum_{i=1}^2 B_{1i}(t)u_{1i}(t)+\sum_{i=1}^2\sum_{j=1}^3 B_{2ij}(t)u_{2ij}(t)\\
    &\quad +\sum_{j=1}^3\sum_{i=1}^{2}B_{3ji}(t)u_{3ji}(t)+E(t)v(t)\Bigg] \mathrm{d}t\\
	&+\Bigg[ C(t)x(t)+\sum_{i=1}^2 D_{1i}(t)u_{1i}(t)+\sum_{i=1}^2\sum_{j=1}^3 D_{2ij}(t)u_{2ij}(t)\\
    &\qquad +\sum_{j=1}^3\sum_{i=1}^{2}D_{3ji}(t)u_{3ji}(t)\Bigg] \mathrm{d}W(t),\qquad t\in[0,T],\\
	x(0)=&\ x_0,\\
\end{aligned}
\right.
\end{equation}
where $A(\cdot),B_{1i}(\cdot),B_{2ij}(\cdot),B_{3ji}(\cdot),C(\cdot),D_{1i}(\cdot),D_{2ij}(\cdot),D_{3ji}(\cdot),E(\cdot)$ are deterministic and uniformly bounded functions on $[0,T]$ of proper dimensions. $v(\cdot)\in L_{\mathbb{F}}^2(0,T;\mathbb{R}^{n_v})$ represents the external unknown disturbance, $x(\cdot)\in\mathbb{R}^n$ is the state process and $x_0\in\mathbb{R}^n$ is the initial state. $u_{1i}(\cdot)\in\mathbb{R}^{m_{1i}}$ represents Decision-making Level 1's control input for the ith of Managerial Level 2; $u_{2ij}(\cdot)\in\mathbb{R}^{m_{2ij}}$ represents the ith of Managerial Level 2's control input for the jth of Executive Level 3; $u_{3ji}(\cdot)\in\mathbb{R}^{m_{3ji}}$ represents the jth of Executive Level 3's control input according to the ith of Managerial Level 2 in the sense of incentive Stackelberg strategy. Moreover, the index $i=1,2$ and $j=1,2,3$ denote the ith player of Managerial Level 2 and the jth player of Executive Level 3.

For the sake of simplicity, for $i=1,2$, $j=1,2,3$, let
\begin{equation*}
\begin{aligned}
	u_1(t)&:=\mathbf{col}\big[u_{11}(t)\;\; u_{12}(t)\big], \\
	u_{2i}(t)&:=\mathbf{col}\big[u_{2i1}(t)\;\; u_{2i2}(t)\;\; u_{2i3}(t)\big], \\
	u_{3j}(t)&:=\mathbf{col}\big[u_{3j1}(t)\;\; u_{3j2}(t)\big], \\
	u_{ci}(t)&:=\mathbf{col}\big[u_{2i1}(t)\;\; u_{2i2}(t)\;\; u_{2i3}(t)\;\; u_{31i}(t)\;\; u_{32i}(t)\;\; u_{33i}(t)\big].
\end{aligned}
\end{equation*}
The admissible control set $\mathcal{U}_1$ of Decision-making Level 1 is defined as follows:
\begin{equation*}
\begin{aligned}
    &\mathcal{U}_1:=\biggl\{u_1 :[0,T]\times\Omega \mapsto \mathbb{R}^{m_1}\,\Big|\,u_1\mbox{ is }\mathbb{F}\mbox{-progressively measurable, } \\
    &\qquad\qquad \mbox{such that }\left( \mathbb{E}\int_0^T |u_1(s)|^2 ds\right) ^\frac{1}{2} < +\infty, \mbox{ with } m_1=\sum_{i=1}^2 m_{1i}  \biggr\} ,
\end{aligned}
\end{equation*}
the admissible control set $\mathcal{U}_{2i}$ of the ith of Managerial Level 2 is
\begin{equation*}
\begin{aligned}
	&\mathcal{U}_{2i}:=\biggl\{u_{2i} :[0,T]\times\Omega \mapsto \mathbb{R}^{m_{2i}}\,\Big|\,u_{2i}\mbox{ is }\mathbb{F}\mbox{-progressively measurable, } \\
	&\qquad\qquad\mbox{such that }\left( \mathbb{E}\int_0^T |u_{2i}(s)|^2 ds\right) ^\frac{1}{2} < +\infty \mbox{ with } m_{2i}=\sum_{j=1}^3 m_{2ij}  \biggr\} ,
\end{aligned}
\end{equation*}
the admissible control set $\mathcal{U}_{3j}$ of the jth of Executive Level 3 is
\begin{equation*}
\begin{aligned}
	&\mathcal{U}_{3j}:=\biggl\{u_{3j} :[0,T]\times\Omega \mapsto \mathbb{R}^{m_{3j}}\,\Big|\,u_{3j}\mbox{ is }\mathbb{F}\mbox{-progressively measurable, } \\
	&\qquad\qquad\mbox{such that }\left( \mathbb{E}\int_0^T |u_{3j}(s)|^2 ds\right) ^\frac{1}{2} < +\infty \mbox{ with } m_{3j}=\sum_{i=1}^2 m_{3ji}  \biggr\} .
\end{aligned}
\end{equation*}

Under some mild conditions on the coefficients, for any $(x_0,u_1,u_{21},u_{22},u_{31},u_{32},u_{33},v)\in\mathbb{R}^n\times\mathcal{U}_1\times\mathcal{U}_{21}\times\mathcal{U}_{22}\times\mathcal{U}_{31}\times\mathcal{U}_{32}\times\mathcal{U}_{33}\times L_{\mathbb{F}}^2(0,T;\mathbb{R}^{n_v})$, there exists a unique (strong) solution $x(\cdot)\equiv x(\cdot;x_0,u_1,u_{21},u_{22},u_{31},u_{32},u_{33},v)\in L_{\mathbb{F}}^2(0,T;\mathbb{R}^{n})$ to (\ref{state}). Thus, we can define the cost functionals as follows.

For Decision-making Level 1:
\begin{equation}\label{cost_1}
\begin{aligned}
	&\qquad J^1(u_1,u_{21},u_{22},u_{31},u_{32},u_{33},v;x_0)\\
	&=\mathbb{E}\bigg\{ \int_0^T\bigg[ \left\langle Q_1(t)x(t),x(t)\right\rangle  +\sum_{i=1}^{2}\left\langle R_i(t)u_{1i}(t),u_{1i}(t)\right\rangle
    +\sum_{i=1}^{2}\sum_{j=1}^{3}\left\langle R_{1ij}(t)u_{2ij}(t),u_{2ij}(t)\right\rangle \\
	&\qquad\qquad\quad+\sum_{j=1}^3\sum_{i=1}^{2}\left\langle \bar{R}_{1ji}(t)u_{3ji}(t),u_{3ji}(t)\right\rangle\bigg] dt+\left\langle G_1x(T),x(T)\right\rangle \bigg\} ,
\end{aligned}
\end{equation}
for the ith of Managerial Level 2:
\begin{equation}\label{cost_2i}
\begin{aligned}
	&\qquad J^2_i(u_1,u_{21},u_{22},u_{31},u_{32},u_{33},v;x_0)\\
	&=\mathbb{E}\bigg\{ \int_0^T\bigg[ \left\langle Q_{2i}(t)x(t),x(t)\right\rangle  +\left\langle R_{2i}(t)u_{1i}(t),u_{1i}(t)\right\rangle
    +\sum_{j=1}^3\left\langle R_{2ij}(t)u_{2ij}(t),u_{2ij}(t)\right\rangle \\
	&\qquad\qquad\quad+\sum_{j=1}^3\left\langle \bar{R}_{2ji}(t)u_{3ji}(t),u_{3ji}(t)\right\rangle\bigg] dt+\left\langle G_{2i}x(T),x(T)\right\rangle \bigg\} ,
\end{aligned}
\end{equation}
for the jth player of Executive Level 3:
\begin{equation}\label{cost_3j}
\begin{aligned}
	&\qquad J^3_j(u_1,u_{21},u_{22},u_{31},u_{32},u_{33},v;x_0)\\
	&=\mathbb{E}\bigg\{ \int_0^T\bigg[ \left\langle Q_{3j}(t)x(t),x(t)\right\rangle  +\sum_{i=1}^{2}\left\langle R_{3ij}(t)u_{2ij}(t),u_{2ij}(t)\right\rangle\\
	&\qquad\qquad\quad+\sum_{i=1}^2\left\langle \bar{R}_{3ji}(t)u_{3ji}(t),u_{3ji}(t)\right\rangle\bigg] dt +\left\langle G_{3j}x(T),x(T)\right\rangle \bigg\} ,
\end{aligned}
\end{equation}
where $Q_1(\cdot),R_i(\cdot),R_{1ij}(\cdot),\bar{R}_{1ji}(\cdot),Q_{2i}(\cdot),R_{2i}(\cdot),R_{2ij}(\cdot),\bar{R}_{2ji}(\cdot),Q_{3j}(\cdot),R_{3ij}(\cdot),\bar{R}_{3ji}(\cdot)$ are deterministic and uniformly bounded symmetric matrix-valued functions on $[0,T]$ of proper dimensions. $G_1$, $G_{2i}$, $G_{3j}$ are $n\times n$ symmetric matrices. In addition, the weighting coefficients of cost functionals (\ref{cost_1})-(\ref{cost_3j}) satisfy the following:

{\bf(A1)} $Q_1(\cdot)\geq0$, $Q_{2i}(\cdot)\geq0$, $Q_{3j}(\cdot)\geq0$, $G_1\geq0$, $G_{2i}\geq0$, $G_{3j}\geq0$.

{\bf(A2)} $R_{i}(\cdot)\gg0$, $R_{1ij}(\cdot)\gg0$, $\bar{R}_{1ji}(\cdot)\gg0$, $R_{2i}(\cdot)\gg0$, $R_{2ij}(\cdot)\gg0$, $\bar{R}_{2ji}(\cdot)\gg0$, $R_{3ij}(\cdot)\gg0$, $\bar{R}_{3ji}(\cdot)\gg0$.
	

Next, we will introduce some definitions. First, we introduce the team-optimal solution concept (see \cite{Basar-Olsder99}), which is an important concept in this paper.
\begin{mydef}\label{def2.1}
Let $J^1(u_1,u_{21},u_{22},u_{31},u_{32},u_{33};x_0)$ be a given cost functional of the leader (Decision-making Level 1), where $u_1(\cdot)$ denotes the leader's control, and $(u_{21}(\cdot),u_{22}(\cdot),u_{31}(\cdot),\\u_{32}(\cdot),u_{33}(\cdot))$ denotes the followers' controls (Managerial Level 2 and Executive Level 3). A control set $(u_1^*,u_{21}^*,u_{22}^*,u_{31}^*,u_{32}^*,u_{33}^*)$ is known as the \textit{team-optimal solution} of this game if
	 \[J^1(u_1^*,u_{21}^*,u_{22}^*,u_{31}^*,u_{32}^*,u_{33}^*;x_0)\leq J^1(u_1,u_{21},u_{22},u_{31},u_{32},u_{33};x_0) ,\]
for any $u_1\in\mathcal{U}_1$, $u_{2i}\in\mathcal{U}_{2i}$, $u_{3j}\in\mathcal{U}_{3j}$.
\end{mydef}
If $J$ is quadratic and strictly convex on the product space $\mathcal{U}_1\times\mathcal{U}_{21}\times\mathcal{U}_{22}\times\mathcal{U}_{31}\times\mathcal{U}_{32}\times\mathcal{U}_{33}$, then a unique team-optimal solution exists.

Second, we introduce the finite horizon stochastic $H_2/H_\infty$ problem.
Consider the following stochastic linear system:
\begin{equation}\label{state-asst}
\left\{
\begin{aligned}
	&dx(t)=\big[A(t)x(t)+B(t)u(t)+E(t)v(t)\big]dt+A_1(t)x(t)dW(t), \\
	&x(0)=x_0 ,\\
	&z(t)=\mathbf{col}\big[C(t)x(t)\;D(t)u(t)\big], \qquad D^\top(t)D(t)=I_m,
\end{aligned}
\right.
\end{equation}
where all coefficient matrices are continuous functions of time with suitable dimensions. $x_0\in\mathbb{R}^n$ denotes the initial state, $u(\cdot)\in L_{\mathbb{F}}^2(0,T;\mathbb{R}^m)$ denotes the control input, $v(\cdot)\in L_{\mathbb{F}}^2(0,T;\mathbb{R}^{n_v})$ denotes the external disturbance, and $W(t)\in\mathbb{R}$ is a 1-dimensional standard Brownian motion defined in the filtered probability space ($\Omega,\mathcal{F},\mathbb{F},\mathbb{P}$). $z(\cdot)\in\mathbb{R}^{n_z}$ represents the controlled output.
Then, the finite horizon stochastic $H_2/H_\infty$ problem of (\ref{state-asst}) can be stated as follows (see \cite{Chen-Zhang04}).
\begin{mydef}\label{H2/H infity def}
Given the disturbance attenuation $\gamma>0$, $0\leq T<\infty$, to find a state feedback control $u^*(t,x)\in L_{\mathbb{F}}^2(0,T;\mathbb{R}^m)$ and the worst case disturbance $v^*(t,x)\in L_{\mathbb{F}}^2(0,T;\mathbb{R}^{n_v})$, such that
\textnormal{\mbox{ \ \ \ }1)}
\begin{equation*}
\begin{aligned}
	\Vert\mathcal{L}\Vert_{[0,T]}&=\underset{v(\cdot)\in L_{\mathbb{F}}^2(0,T;\mathbb{R}^{n_v}), v\neq0, x_0=0}{\sup} \frac{\Vert z\Vert_{[0,T]}}{\Vert v\Vert_{[0,T]}}  \\
	&=\underset{v(\cdot)\in L_{\mathbb{F}}^2(0,T;\mathbb{R}^{n_v}), v\neq0, x_0=0}{\sup}
    \frac{\left\lbrace \mathbb{E}\int_0^T[x^\top C ^\top Cx+{u^*}^\top u^*]dt\right\rbrace ^\frac{1}{2}}{\left\lbrace \mathbb{E}\int_0^Tv^\top vdt\right\rbrace ^\frac{1}{2}}<\gamma.
\end{aligned}
\end{equation*}
\textnormal{\mbox{ \ \ \ }2)}
When the worst case disturbance $v^*(t,x)\in L_{\mathbb{F}}^2(0,T;\mathbb{R}^{n_v})$, if it exists, is applied to (\ref{state-asst}), $u^*(\cdot)$ minimizes the output energy
\begin{equation}\label{cost Ju}
	J_u(u,v^*)=\mathbb{E}\int_{0}^{T}[x^\top C ^\top Cx+u^\top u]dt.
\end{equation}
Here, the so-called worst case disturbance $v^*$ means that
\begin{equation}
\begin{aligned}
	v^*(t,x)&=\underset{v\in L_{\mathbb{F}}^2(0,T;\mathbb{R}^{n_v})}{\arg\min}J_v(u^*,v) \\
	&=\underset{v\in L_{\mathbb{F}}^2(0,T;\mathbb{R}^{n_v})}{\arg\min}\mathbb{E}\int_{0}^{T}[\gamma^2v^\top v-z^\top z]dt,\quad\forall x_0\in\mathbb{R}^n.
\end{aligned}
\end{equation}
If the previous $(u^*,v^*)$ exists, then we say that the finite horizon $H_2/H_\infty$ control has \textit{Nash equilibrium solution} $(u^*,v^*)$, i.e.,
\begin{equation*}
	J_u(u^*,v^*)\leq J_u(u,v^*),\quad J_v(u^*,v^*)\leq J_v(u^*,v),
\end{equation*}
$u^*$ is a solution to the stochastic $H_2/H_\infty$ control, and $v^*$ is the corresponding worst-case disturbance.
\end{mydef}

In \cite{Limebeer94}, the finite horizon stochastic $H_2/H_\infty$ control problem can be formulated as a stochastic LQ nonzero-sum game.
	
Three-level incentive Stackelberg game under the $H_\infty$ constraint with multiple leaders and multiple followers, in this paper, is formulated as follows.

\textnormal{1)}
Given the disturbance attenuation level $\gamma>0$, find the team-optimal strategy of Decision-making Level 1 with $H_\infty$ constraint:
\begin{equation}
\begin{aligned}
	J^1(u_1^*,u_{21}^*,u_{22}^*,u_{31}^*,u_{32}^*,u_{33}^*,v^*;x_0)\leq J^1(u_1,u_{21},u_{22},u_{31},u_{32},u_{33},v^*;x_0), \\
	0\leq J_v(u_1^*,u_{21}^*,u_{22}^*,u_{31}^*,u_{32}^*,u_{33}^*,v^*;x_0)\leq J_v(u_1^*,u_{21}^*,u_{22}^*,u_{31}^*,u_{32}^*,u_{33}^*,v;x_0),
\end{aligned}
\end{equation}
for any $(u_1,u_{21},u_{22},u_{31},u_{32},u_{33})\in\mathcal{U}_{1}\times\mathcal{U}_{21}\times\mathcal{U}_{22}\times\mathcal{U}_{31}\times\mathcal{U}_{32}\times\mathcal{U}_{33}$, $v\neq0\in L_{\mathbb{F}}^2(0,T;\mathbb{R}^{n_v})$, where
\begin{equation}\label{cost Jv}
\begin{aligned}
	&\qquad J_v(u_1,u_{21},u_{22},u_{31},u_{32},u_{33},v;x_0)\\
	&:=\mathbb{E}\bigg\{ -\left\langle G_1x(T),x(T)\right\rangle +\int_0^T\bigg[\gamma^2\Vert v\Vert^2- \left\langle Q_1(t)x(t),x(t)\right\rangle  -\sum_{i=1}^2\left\langle R_i(t)u_{1i}(t),u_{1i}(t)\right\rangle\\
    &\qquad\quad-\sum_{i=1}^2\sum_{j=1}^3\left\langle R_{1ij}(t)u_{2ij}(t),u_{2ij}(t)\right\rangle -\sum_{j=1}^3\sum_{i=1}^{2}\left\langle \bar{R}_{1ji}(t)u_{3ji}(t),u_{3ji}(t)\right\rangle\bigg] dt\bigg\}.
\end{aligned}
\end{equation}

\textnormal{2)}
Decision-making Level 1 announces the following feedback strategy in advance to Managerial Level 2:
\begin{equation}\label{incentive form u_1i}
	u_{1i}(t)=\Gamma_{1i}\big(x(t),u_{2i}(t),u_{31i}(t),u_{32i}(t),u_{33i}(t),t\big).
\end{equation}
The parameters in (\ref{incentive form u_1i}) can be determined in the next step.

\textnormal{3)}
Find the Nash equilibrium strategies of Managerial Level 2:
\begin{equation}
\begin{aligned}
	\bar{J}^2_1(u^+_{c1},u^+_{c2};x_0)\leq\bar{J}^2_1(u_{c1},u^+_{c2};x_0),\\
	\bar{J}^2_2(u^+_{c1},u^+_{c2};x_0)\leq\bar{J}^2_2(u^+_{c1},u_{c2};x_0),
\end{aligned}
\end{equation}
where for $i=1,2$,
\begin{equation}
	\bar{J}^2_i(u_{c1},u_{c2};x_0):=J^2_i\big(\Gamma_1(x,,u_{2i},u_{31i},u_{32i},u_{33i}),u_{21},u_{22},u_{31},u_{32},u_{33},v^*(x);x_0\big),
\end{equation}
and $u_{ci}=\mathbf{col}\big[u_{2i1}\;u_{2i2}\;u_{2i3}\;u_{31i}\;u_{32i}\;u_{33i}\big]$, $\Gamma_1=\mathbf{col}\big[\Gamma_{11},\Gamma_{12}\big]$, $v^*(x)$ is the closed-loop form we designed based on the state feedback form of the worst-case disturbance $v^*(\cdot)$ solved in the first step. Decide parameters in (\ref{incentive form u_1i}) such that $u^+_{ci}(t)=u^*_{ci}(t)$, then the corresponding incentive strategy is denoted as $\Gamma^*_{1i}$.

\textnormal{4)}
Managerial Level 2 announce the following feedback strategy in advance to Executive Level 3:
\begin{equation}\label{incentive form u_2ij}
	u_{2ij}(t)=\Gamma_{2ij}\big(x(t),u_{3ji}(t),t\big).
\end{equation}
The parameters in (\ref{incentive form u_2ij}) can be determined in the next step.

\textnormal{5)}
Find the Nash equilibrium strategies of Executive Level 3:
\begin{equation}
\begin{aligned}
	\bar{J}^3_j(\bar{u}_{31},\bar{u}_{32},\bar{u}_{33};x_0)\leq\bar{J}^3_j\big(\bar{u}_{-3j}(u_{3j});x_0\big),
\end{aligned}
\end{equation}
where for $j=1,2,3$,
\begin{equation}
	\bar{J}^3_j(u_{31},u_{32},u_{33};x_0):=J^3_j\big(\Gamma_{11}^*,\Gamma_{12}^*,(\Gamma_{2ij})_{i,j},u_{31},u_{32},u_{33},v^*(x);x_0\big),
\end{equation}
and $u_{3j}=\mathbf{col}\big[u_{3j1}\;u_{3j2}\big]$, $\bar{u}_{-3j}(u_{3j})$ denotes the control
profile which include $u_{3j}$ and $\bar{u}_{3k}$ ($k=1,2,3$, $k\neq j$). Decide parameters in (\ref{incentive form u_2ij}) such that $\bar{u}_{3j}(t)=u^*_{3j}(t)$, then the corresponding incentive strategy is denoted as $\Gamma^*_{2ij}$.

In this article, we may suppress time index t if it causes no confusion.

\section{Incentive Stackelberg equilibrium strategies}

\subsection{Team-optimal strategy}

As mentioned above, the state equation is (\ref{state}), and the cost functional of Decision-making Level 1 is (\ref{cost_1}). Due to the definition of team-optimal solution,  for simplicity, we introduce the following notation, centralizing the control inputs in the stochastic system (\ref{state}) and (\ref{cost_1}). The following centralized stochastic system can be introduced:
\begin{equation}\label{c-state}
\left\{
\begin{aligned}
	dx(t)&=\big[A(t)x(t)+B_c(t)u_c(t)+E(t)v(t)\big]dt+\big[C(t)x(t)+D_c(t)u_c(t)\big]dW(t), \\
	x(0)&=x_0,
\end{aligned}
\right.
\end{equation}
together with
\begin{equation}\label{J1c}
	J^1(u_c,v;x_0)=\mathbb{E}\bigg\{ \int_0^T\big[ \left\langle Q_1(t)x(t),x(t)\right\rangle+\left\langle R_c(t)u_c(t),u_c(t)\right\rangle\big]dt+\left\langle G_1x(T),x(T)\right\rangle \bigg\} ,
\end{equation}
\begin{equation}\label{Jvc}
	J_v(u_c,v;x_0)=\mathbb{E}\bigg\{ \int_0^T\big[\gamma^2\Vert v\Vert^2-\left\langle Q_1(t)x(t),x(t)\right\rangle-\left\langle R_c(t)u_c(t),u_c(t)\right\rangle\big]dt-\left\langle G_1x(T),x(T)\right\rangle \bigg\} ,
\end{equation}
where
\begin{equation*}
\begin{aligned}
	B_c&:=\big[B_{11}\;B_{12}\;B_{211}\;B_{212}\;B_{213}\;B_{221}\;B_{222}\;B_{223}\;B_{311}\;B_{312}\;B_{321}\;B_{322}\;B_{331}\;B_{332}\big],\\
	D_c&:=\big[D_{11}\;D_{12}\;D_{211}\;D_{212}\;D_{213}\;D_{221}\;D_{222}\;D_{223}\;D_{311}\;D_{312}\;D_{321}\;D_{322}\;D_{331}\;D_{332}\big],\\
	R_c&:=\mathbf{block}\;\mathbf{diag}\big(R_1\;R_2\;R_{111}\;R_{112}\;R_{113}\;R_{121}\;R_{122}\;R_{123}\;\bar{R}_{111}\;\bar{R}_{112}\;\bar{R}_{121}\;\bar{R}_{122}\;\bar{R}_{131}\;\bar{R}_{132}\big),\\
	u_c&:=\mathbf{col}\big[u_1\;u_{21}\;u_{22}\;u_{31}\;u_{32}\;u_{33}\big].
\end{aligned}
\end{equation*}

\begin{mypro}\label{prop-3.1}
Let (A1)-(A2) hold, and assume the attenuation index $\gamma$ is sufficiently large. Then, $(u^*_c(\cdot),v^*(\cdot))$ is an open-loop Nash equilibrium if and only if there exists a (unique) triple $(\bar{x}(\cdot),\bar{y}(\cdot),\bar{z}(\cdot))$ satisfying the FBSDE:
\begin{equation}\label{Hamiltonian-1}	
\left\{\begin{aligned}
	d\bar{x}&=\big[A\bar{x}-(B_cR^{-1}_cB_c^\top-\gamma^{-2}EE^\top)\bar{y}-B_cR^{-1}_cD^\top_c\bar{z}\big]dt\\
    &\quad +\big[C\bar{x}-D_cR^{-1}_cB^\top_c\bar{y}-D_cR^{-1}_cD^\top_c\bar{z}\big]dW, \\
	d\bar{y}&=-\big[A^\top\bar{y}+C^\top\bar{z}+Q_1\bar{x}\big]dt+\bar{z}(t)dW, \\
	\bar{x}(0)&=x_0, \quad \bar{y}(T)=G_1\bar{x}(T).
\end{aligned}\right.
\end{equation}
Moreover, the open-loop Nash equilibrium $(u^*_c(\cdot),v^*(\cdot))$ is given by
\begin{equation}\label{olne}
\begin{aligned}
	u^*_c(t)&=-R_c^{-1}(t)\big[B^\top_c(t)\bar{y}(t)+D^\top_c(t)\bar{z}(t)\big] , \\
	v^*(t)&=\gamma^{-2}E^\top(t)\bar{y}(t),
\end{aligned}
\end{equation}
and the corresponding optimal functionals are
\begin{equation*}
\begin{aligned}
	J^1(u^*_c,v^*;x_0)=&\left\langle x_0,\bar{y}(0)\right\rangle+\gamma^{-2}\mathbb{E} \int_0^T\langle  E(t)E^\top(t) \bar{y}(t),\bar{y}(t)\rangle  dt,\\
	J_v(u^*_c,v^*;x_0)=&-\left\langle x_0,\bar{y}(0)\right\rangle.
\end{aligned}
\end{equation*}
\end{mypro}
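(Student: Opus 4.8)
The plan is to use the fact that an open-loop Nash equilibrium of the two-player game decouples into two separate stochastic optimal control problems governed by the common state equation \eqref{c-state}: with $v^*$ frozen, $u_c^*$ must minimize $J^1(\cdot,v^*;x_0)$ over $\mathcal{U}_1\times\cdots\times\mathcal{U}_{33}$; with $u_c^*$ frozen, $v^*$ must minimize $J_v(u_c^*,\cdot;x_0)$ over $L^2_{\mathbb{F}}(0,T;\mathbb{R}^{n_v})$. I would first record the convexity of each subproblem. By (A1)--(A2), $J^1(\cdot,v^*;x_0)$ is strictly convex in $u_c$ since $R_c\gg0$ while $Q_1,G_1\geq0$; and $J_v(u_c^*,\cdot;x_0)$ is convex in $v$ precisely when $\gamma$ is large enough that the positive term $\gamma^2\|v\|^2$ dominates the sign-indefinite contribution of $-\langle Q_1x,x\rangle$ and $-\langle G_1x(T),x(T)\rangle$ transmitted through the linear solution map $v\mapsto x$. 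Necessity of the first-order conditions holds at any minimizer, while this convexity is what upgrades stationarity back to a global minimum; together they deliver the ``if and only if''.

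For the stationarity conditions I would compute the Gateaux derivative of each cost along admissible perturbations $u_c^*+\varepsilon w$ and $v^*+\varepsilon w'$, with the associated variational equation for $\delta x$ carrying zero initial data ($B_cw$ in drift and $D_cw$ in diffusion in the first case, $Ew'$ in drift only in the second). I would introduce a single adjoint pair $(\bar y,\bar z)$ solving the BSDE $d\bar y=-[A^\top\bar y+C^\top\bar z+Q_1\bar x]dt+\bar z\,dW$, $\bar y(T)=G_1\bar x(T)$, exactly the backward part of \eqref{Hamiltonian-1}. The structural observation that streamlines everything is that, because the state-weights of $J_v$ are the negatives of those of $J^1$, this same $(\bar y,\bar z)$ serves both variational computations. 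Applying It\^o's formula to $\langle\bar y,\delta x\rangle$ and cancelling the $A$- and $C$-duality terms yields the identity $\mathbb{E}\int_0^T\langle Q_1\bar x,\delta x\rangle dt+\mathbb{E}\langle G_1\bar x(T),\delta x(T)\rangle=\mathbb{E}\int_0^T\langle B_c^\top\bar y+D_c^\top\bar z,w\rangle dt$ in the first case and $=\mathbb{E}\int_0^T\langle E^\top\bar y,w'\rangle dt$ in the second. Substituting these turns the two Gateaux derivatives into $2\mathbb{E}\int_0^T\langle R_cu_c^*+B_c^\top\bar y+D_c^\top\bar z,w\rangle dt$ and $2\mathbb{E}\int_0^T\langle\gamma^2v^*-E^\top\bar y,w'\rangle dt$; vanishing for arbitrary $w,w'$ gives exactly the feedback forms \eqref{olne} for $u_c^*$ and $v^*$.

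Substituting \eqref{olne} into \eqref{c-state} produces the forward SDE of \eqref{Hamiltonian-1}, completing its derivation; conversely, given any solution triple $(\bar x,\bar y,\bar z)$ of \eqref{Hamiltonian-1}, defining $(u_c^*,v^*)$ by \eqref{olne} and invoking the convexity above shows the pair is a Nash equilibrium. The two cost identities then follow from one further application of It\^o's formula, this time to $\langle\bar y,\bar x\rangle$: using $\bar y(T)=G_1\bar x(T)$ and inserting the feedback forms, the $\langle B_cR_c^{-1}B_c^\top\bar y,\bar y\rangle$, cross, and $\langle D_cR_c^{-1}D_c^\top\bar z,\bar z\rangle$ terms cancel against those produced by $\langle R_cu_c^*,u_c^*\rangle$, leaving $J^1(u_c^*,v^*;x_0)=\langle x_0,\bar y(0)\rangle+\gamma^{-2}\mathbb{E}\int_0^T\langle EE^\top\bar y,\bar y\rangle dt$. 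The expression for $J_v$ then drops out from the relation $J_v(u_c^*,v^*;x_0)=\gamma^{-2}\mathbb{E}\int_0^T\langle EE^\top\bar y,\bar y\rangle dt-J^1(u_c^*,v^*;x_0)$, using $v^*=\gamma^{-2}E^\top\bar y$.

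The main obstacle, I expect, is not the variational bookkeeping but two interlocking well-posedness issues tied to the ``$\gamma$ sufficiently large'' hypothesis: (i) proving that $J_v(u_c^*,\cdot;x_0)$ is genuinely convex, equivalently that the quadratic form $v\mapsto\mathbb{E}\int_0^T\gamma^2\|v\|^2dt-(\text{state energy})$ is coercive, which is what makes $v^*$ a true minimizer rather than a saddle point; and (ii) establishing existence and uniqueness of the solution to the fully coupled FBSDE \eqref{Hamiltonian-1}, whose solvability rests on the same large-$\gamma$ monotonicity. I would address (i) by bounding the operator norm of the map $v\mapsto x$ uniformly in the data on $[0,T]$ and choosing $\gamma$ above that bound, and (ii) either by the method of continuation for monotone FBSDEs or by reduction to a decoupling Riccati equation, observing that the required positivity is precisely the coercivity from (i).
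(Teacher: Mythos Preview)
Your proposal is correct and follows exactly the standard variational/maximum-principle route that underlies Theorem~2.2.1 of Sun--Yong~\cite{Sun-Yong20}, which is all the paper invokes; the paper itself gives no argument beyond that citation. Your explicit derivation---including the observation that a single adjoint pair $(\bar y,\bar z)$ serves both players because the state weights of $J_v$ are the negatives of those of $J^1$, and the It\^o computation of the optimal values---is therefore more complete than what the paper presents, and your discussion of the large-$\gamma$ convexity/FBSDE-solvability issues makes precise what the paper leaves implicit in the phrase ``$\gamma$ sufficiently large''.
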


\begin{proof}
By Theorem 2.2.1 of Sun et al. \cite{Sun-Yong20}, the proof is omitted here.
\end{proof}

In the following, by applying the decoupling technique, we have the closed-loop representation of open-loop Nash equilibrium.
\begin{mypro}\label{prop-3.2}
 Suppose that the following equation
\begin{equation}\label{P}
\left\{\begin{aligned}
	&\dot{P}+PA+A^\top P-P(B_cR^{-1}_cB_c^\top-\gamma^{-2}EE^\top)P+Q_1 \\
	&\quad+(C-D_cR^{-1}_cB^\top_cP)^\top(I+PD_cR^{-1}_cD^\top_c)^{-1}P(C-D_cR^{-1}_cB^\top_cP)=0,\\
	&P(T)=G_1,
\end{aligned}\right.
\end{equation} 	
admits a solution $P(\cdot)$ such that $I+PD_cR^{-1}_cD^\top_c$ is invertible, then the Nash equilibrium (\ref{olne}) has the following
representation:
\begin{equation}\label{cor}
\begin{aligned}
	u^*_c(t)&=-R_c^{-1}\big[B^\top_cP+D^\top_c(I+PD_cR^{-1}_cD^\top_c)^{-1}P(C-D_cR^{-1}_cB^\top_cP)\big]\bar{x}(t) ,  \\
	v^*(t)&=\gamma^{-2}E^\top P\bar{x}(t).
\end{aligned}
\end{equation}
\end{mypro}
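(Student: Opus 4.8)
The plan is to use the standard FBSDE decoupling (guess-and-verify) argument, postulating the Ansatz $\bar{y}(t)=P(t)\bar{x}(t)$ with $P(\cdot)$ a deterministic symmetric matrix function, and then determining both $P(\cdot)$ and the closed-loop form of $\bar{z}(\cdot)$ by matching coefficients. Concretely, I would first apply It\^o's formula to $P(t)\bar{x}(t)$, using the forward dynamics of $\bar{x}$ in (\ref{Hamiltonian-1}), to obtain
\begin{equation*}
d\big(P\bar{x}\big)=\big[\dot{P}\bar{x}+Pb^x\big]dt+P\sigma^x\,dW,
\end{equation*}
where $b^x$ and $\sigma^x$ denote the drift and diffusion coefficients of the $\bar{x}$-equation. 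Comparing the diffusion term with that of the $\bar{y}$-equation $d\bar{y}=\cdots+\bar{z}\,dW$ forces $\bar{z}=P\sigma^x$; substituting $\bar{y}=P\bar{x}$ into $\sigma^x$ and solving the resulting linear relation for $\bar{z}$ yields
\begin{equation*}
\bar{z}=(I+PD_cR^{-1}_cD^\top_c)^{-1}P\big(C-D_cR^{-1}_cB^\top_cP\big)\bar{x},
\end{equation*}
which is exactly where the invertibility hypothesis on $I+PD_cR^{-1}_cD^\top_c$ is needed.

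Next I would match the drift terms. Writing out $\dot{P}\bar{x}+Pb^x$ and equating it to $-[A^\top P\bar{x}+C^\top\bar{z}+Q_1\bar{x}]$, the terms proportional to $\bar{x}$ assemble into $[\dot{P}+PA+A^\top P-P(B_cR^{-1}_cB_c^\top-\gamma^{-2}EE^\top)P+Q_1]\bar{x}$, while the remaining terms carry the coefficient $C^\top-PB_cR^{-1}_cD^\top_c$ in front of $\bar{z}$. The key algebraic observation is that, since $R_c$ and $P$ are symmetric, $C^\top-PB_cR^{-1}_cD^\top_c=(C-D_cR^{-1}_cB^\top_cP)^\top$; inserting the closed-loop expression for $\bar{z}$ then turns the $\bar{z}$-contribution into precisely $(C-D_cR^{-1}_cB^\top_cP)^\top(I+PD_cR^{-1}_cD^\top_c)^{-1}P(C-D_cR^{-1}_cB^\top_cP)\bar{x}$. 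Hence the drift identity holds for every $\bar{x}$ if and only if $P(\cdot)$ satisfies the Riccati equation (\ref{P}), and the terminal conditions match because $\bar{y}(T)=G_1\bar{x}(T)$ together with $\bar{y}(T)=P(T)\bar{x}(T)$ gives $P(T)=G_1$.

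To make this a rigorous verification rather than a heuristic, I would run the argument in reverse: given a solution $P(\cdot)$ of (\ref{P}) with $I+PD_cR^{-1}_cD^\top_c$ invertible, substitute $\bar{y}=P\bar{x}$ and the closed-loop $\bar{z}$ above into the forward equation to obtain a well-posed linear SDE for $\bar{x}$ with bounded coefficients, which admits a unique solution; defining $\bar{y}:=P\bar{x}$ and $\bar{z}$ by the formula, the computation just described shows $(\bar{x},\bar{y},\bar{z})$ solves the FBSDE (\ref{Hamiltonian-1}). By the uniqueness asserted in Proposition \ref{prop-3.1}, this triple coincides with the equilibrium triple, so substituting $\bar{y}=P\bar{x}$ and the closed-loop $\bar{z}$ into (\ref{olne}) yields the representation (\ref{cor}) for $u^*_c$ and $v^*$. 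The main obstacle is purely algebraic: carefully carrying out the coefficient matching and recognizing the symmetry identity $C^\top-PB_cR^{-1}_cD^\top_c=(C-D_cR^{-1}_cB^\top_cP)^\top$, so that the elimination of $\bar{z}$ reproduces exactly the quadratic term in (\ref{P}); the well-posedness and uniqueness parts are routine once the invertibility hypothesis is granted.
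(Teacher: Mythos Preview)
Your proposal is correct and follows essentially the same decoupling approach as the paper: postulate $\bar{y}=P\bar{x}$, apply It\^o's formula, and match diffusion then drift coefficients to derive the expression for $\bar{z}$ and the Riccati equation (\ref{P}). Your version is in fact more detailed than the paper's proof, which simply states the Ansatz and the resulting formulas; your explicit identification of the symmetry step $C^\top-PB_cR^{-1}_cD^\top_c=(C-D_cR^{-1}_cB^\top_cP)^\top$ and the reverse-verification argument via uniqueness of (\ref{Hamiltonian-1}) are welcome additions that the paper leaves implicit.
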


\begin{proof}
Let $\bar{y}=P\bar{x}$, applying It\^o's formula to it and comparing the coefficients with (\ref{Hamiltonian-1}), we have
\begin{equation}\label{bar z}
	\bar{z}(t)=(I+PD_cR^{-1}_cD^\top_c)^{-1}P(C-D_cR^{-1}_cB^\top_cP)\bar{x}(t),
\end{equation}
and $P(\cdot)$ satisfies (\ref{P}). Therefore, the Nash equilibrium (\ref{olne}) has the representation (\ref{cor}).
By $\bar{y}=P\bar{x}$ and (\ref{bar z}), we have the following equation satisfied by the optimal trajectory $\bar{x}(\cdot)$.
\begin{equation}\label{bar x}
	d\bar{x}(t)=\mathbf{a}(t)\bar{x}(t)dt+\mathbf{b}(t)\bar{x}(t)dW(t),\qquad \bar{x}(0)=x_0,
\end{equation}
where
\begin{equation*}
\begin{aligned}
	\mathbf{a}&:=A-(B_cR^{-1}_cB_c^\top-\gamma^{-2}EE^\top)P-B_cR^{-1}_cD^\top_c(I+PD_cR^{-1}_cD^\top_c)^{-1}P(C-D_cR^{-1}_cB^\top_cP),\\
	\mathbf{b}&:=C-D_cR^{-1}_cB^\top_cP-D_cR^{-1}_cD^\top_c(I+PD_cR^{-1}_cD^\top_c)^{-1}P(C-D_cR^{-1}_cB^\top_cP).
\end{aligned}
\end{equation*}
\end{proof}

\begin{Remark}
When the initial state $x_0=0$, by (\ref{bar x}) and (\ref{cor}), $\bar{x}(t)\equiv0$, $u^*_c(t)\equiv0$, and $v^*(t)\equiv0$, from which we can get
	\[J_v(u^*_c,v^*;0)=0.\]
Therefore, by the definition of the finite horizon $H_2/H_\infty$ control problem, $(u^*_c,v^*)$ given by (\ref{cor}) is a pair of solutions. $u^*$ is $H_2/H_\infty$ optimal control, and $v^*$ is the worst-case disturbance.
\end{Remark}

For notational simplicity, we set
\[\Lambda:=(I+PD_cR^{-1}_cD^\top_c)^{-1}P(C-D_cR^{-1}_cB^\top_cP),\]
then
\[u^*_c(t)=-R_c^{-1}[B^\top_cP+D^\top_c\Lambda]\bar{x}(t) .\]
By $R_c$, $B_c$, $D_c$ and $u_c$, we can display and represent all elements in $u^*_c(\cdot)$ as follows:
\begin{equation}\label{team-optimal}
\begin{aligned}	
	u^*_c&=\mathbf{col}\big[u^*_1\;u^*_{21}\;u^*_{22}\;u^*_{31}\;u^*_{32}\;u^*_{33}\big],\\
	&=\mathbf{col}\big[u^*_{11}\;u^*_{12}\;u^*_{211}\;u^*_{212}\;u^*_{213}\;u^*_{221}\;u^*_{222}\;u^*_{223}\;u^*_{311}\;u^*_{312}\;u^*_{321}\;u^*_{322}\;u^*_{331}\;u^*_{332}\;\big],\\
	u^*_{1i}(t)&=-R^{-1}_i(t)\big[B^\top_{1i}(t)P(t)+D^\top_{1i}(t)\Lambda(t)\big]\bar{x}(t),\\
	u^*_{2ij}(t)&=-R^{-1}_{1ij}(t)\big[B^\top_{2ij}(t)P(t)+D^\top_{2ij}(t)\Lambda(t)\big]\bar{x}(t),\\
	u^*_{3ji}(t)&=-\bar{R}^{-1}_{1ji}(t)\big[B^\top_{3ji}(t)P(t)+D^\top_{3ji}(t)\Lambda(t)\big]\bar{x}(t).
\end{aligned}
\end{equation}
In other words, (\ref{team-optimal}) is
the team-optimal solution with $H_\infty$ constraint in three-level incentive system, and the worst-case disturbance is $v^*(t)=\gamma^{-2}E^\top P\bar{x}(t)$.

For the sake of the discussion that follows, we set
\begin{equation*}
\begin{aligned}
	R_{1i}&:=\mathbf{block}\;\mathbf{diag}(R_{1i1}\;R_{1i2}\;R_{1i3}\;\bar{R}_{11i}\;\bar{R}_{12i}\;\bar{R}_{13i}),\\
	B_i&:=\big[B_{2i1}\;B_{2i2}\;B_{2i3}\;B_{31i}\;B_{32i}\;B_{33i}\;\big],\quad D_i:=[D_{2i1}\;D_{2i2}\;D_{2i3}\;D_{31i}\;D_{32i}\;D_{33i}\;],
\end{aligned}
\end{equation*}
then
\begin{equation}\label{u*ci}
	u^*_{ci}(t)=-R^{-1}_{1i}(t)\big[B^\top_{i}(t)P(t)+D^\top_{i}(t)\Lambda(t)\big]\bar{x}(t).
\end{equation}

\subsection{Nash equilibrium of Managerial Level 2}

Based on the incentive given by Decision-making Level 1 ahead of time, two people in Managerial Level 2 determine their strategies to achieve a Nash equilibrium by
responding to the announced strategy of Decision-making Level 1. Assume incentive form of Decision-making Level 1 is the following feedback strategy:

\begin{equation}\label{Gamma 1i}
\begin{aligned}
	u_{1i}(t)&=\Gamma_{1i}\big(x(t),u_{2i}(t),u_{31i}(t),u_{32i}(t),u_{33i}(t),t\big)\\
	&=-R^{-1}_i(B^\top_{1i}P+D^\top_{1i}\Lambda)x+\sum_{j=1}^3\eta_{1ij}\big[u_{2ij}+R^{-1}_{1ij}(B^\top_{2ij}P+D^\top_{2ij}\Lambda)x\big]\\
    &\quad +\sum_{j=1}^3\zeta_{1ij}\big[u_{3ji}+\bar{R}^{-1}_{1ji}(B^\top_{3ji}P+D^\top_{3ji}\Lambda)x\big]\\
	&=\bigg[-R^{-1}_i(B^\top_{1i}P+D^\top_{1i}\Lambda)+\sum_{j=1}^3\eta_{1ij}R^{-1}_{1ij}(B^\top_{2ij}P+D^\top_{2ij}\Lambda)\\
    &\qquad +\sum_{j=1}^3\zeta_{1ij}\bar{R}^{-1}_{1ji}(B^\top_{3ji}P+D^\top_{3ji}\Lambda)\bigg]x +\sum_{j=1}^{3}\eta_{1ij}u_{2ij}+\sum_{j=1}^3\zeta_{1ij}u_{3ji}\\
	&=\sum_{j=1}^3\xi_{1ij}(t)x(t)+\sum_{j=1}^{3}\eta_{1ij}(t)u_{2ij}(t)+\sum_{j=1}^3\zeta_{1ij}(t)u_{3ji}(t),\\
\end{aligned}
\end{equation}
where
\[\xi_{1ij}:=-R^{-1}_i(B^\top_{1i}P+D^\top_{1i}\Lambda)+\eta_{1ij}R^{-1}_{1ij}(B^\top_{2ij}P+D^\top_{2ij}\Lambda)+\zeta_{1ij}\bar{R}^{-1}_{1ji}(B^\top_{3ji}P+D^\top_{3ji}\Lambda),\]
and $\xi_{1ij}(t)\in\mathbb{R}^{m_{1i}\times n}$ $\eta_{1ij}(t)\in\mathbb{R}^{m_{1i}\times m_{2ij}}$, $\zeta_{1ij}(t)\in\mathbb{R}^{m_{1i}\times m_{3ji}}$ are (unknown) strategy parameter matrices whose components are continuous functions of $t$ on the interval $[0,T]$ respectively. Notice that
\begin{equation}\label{u*_1i}
\begin{aligned}
	u^*_{1i}(t)&=\Gamma_{1i}\big(\bar{x}(t),u^*_{2i}(t),u^*_{31i}(t),u^*_{32i}(t),u^*_{33i}(t),t\big) \\
	&=\sum_{j=1}^3\xi_{1ij}(t)\bar{x}(t)+\sum_{j=1}^3\eta_{1ij}(t)u^*_{2ij}(t)+\sum_{j=1}^3\zeta_{1ij}(t)u^*_{3ji}(t).
\end{aligned}
\end{equation}

Design the worst-case disturbance $v^*(\cdot)$ has the following closed-loop form
\begin{equation}\label{v}
	v^*(x)(t)=\gamma^{-2}(t)E^\top(t)P(t)x(t).
\end{equation}
Then we substitute (\ref{Gamma 1i}) and (\ref{v}) into state equation (\ref{state}) and cost functional for the ith of Managerial Level 2 (\ref{cost_2i}). By simplification, we have
\begin{equation}\label{state equation2}
\left\{\begin{aligned}
    dx&=\bigg[\bar{A}x+\sum_{i=1}^2\sum_{j=1}^3\bar{B}_{2ij}u_{2ij}+\sum_{j=1}^3\sum_{i=1}^2\bar{B}_{3ji}u_{3ji}\bigg]dt\\
    &\quad +\bigg[\bar{C}x+\sum_{i=1}^2\sum_{j=1}^{3}\bar{D}_{2ij}u_{2ij}+\sum_{j=1}^3\sum_{i=1}^{2}\bar{D}_{3ji}u_{3ji}\bigg]dW, \\
	&=\bigg[\bar{A}x+\sum_{i=1}^2\bar{B}_{ci}u_{ci}\bigg]dt+\bigg[\bar{C}x+\sum_{i=1}^2\bar{D}_{ci}u_{ci}\bigg]dW,\\
	x(0)&=x_0,
\end{aligned}\right.
\end{equation}
\begin{equation}\label{cost 2}
\begin{aligned}
	\bar{J}^2_i(u_{c1},u_{c2};x_0)&=\mathbb{E}\bigg\{ \int_0^T\big[ \left\langle \bar{Q}_{2i}(t)x(t),x(t)\right\rangle +\,2 \left\langle x(t),S_{2i}(t)u_{ci}(t)\right\rangle \\
	&\qquad\qquad +\left\langle R_{ci}(t)u_{ci}(t),u_{ci}(t)\right\rangle\big]dt+\left\langle G_{2i}x(T),x(T)\right\rangle \bigg\} ,
\end{aligned}
\end{equation}
where
\begin{equation*}
\begin{aligned}
	u_{ci}(t)&:=\mathbf{col}\big[u_{2i1}(t)\;\; u_{2i2}(t)\;\; u_{2i3}(t)\;\; u_{31i}(t)\;\; u_{32i}(t)\;\; u_{33i}(t)\big],\\
	\bar{A}&:=A+\gamma^{-2}EE^\top P+\sum_{i=1}^2\sum_{j=1}^3B_{1i}\xi_{1ij},\quad \bar{C}:=C+\sum_{i=1}^2\sum_{j=1}^3D_{1i}\xi_{1ij},\\
	\bar{B}_{2ij}&:=B_{2ij}+B_{1i}\eta_{1ij},\quad \bar{B}_{3ji}:=B_{3ji}+B_{1i}\zeta_{1ij},\quad \bar{B}_{ci}:=\big[\bar{B}_{2i1}\;\bar{B}_{2i2}\;\bar{B}_{2i3}\;\bar{B}_{31i}\;\bar{B}_{32i}\;\bar{B}_{33i}\big],\\
	\bar{D}_{2ij}&:=D_{2ij}+D_{1i}\eta_{1ij},\quad \bar{D}_{3ji}:=D_{3ji}+D_{1i}\zeta_{1ij},\quad \bar{D}_{ci}:=\big[\bar{D}_{2i1}\;\bar{D}_{2i2}\;\bar{D}_{2i3}\;\bar{D}_{31i}\;\bar{D}_{32i}\;\bar{D}_{33i}\big],\\
	\bar{Q}_{2i}&:=Q_{2i}+\bigg(\sum_{j=1}^3\xi_{1ij}^\top\bigg)R_{2i}\bigg(\sum_{j=1}^3\xi_{1ij}\bigg),\quad
    S_{2i}:=\sum_{j=1}^{3}\xi_{1ij}^\top R_{2i}\big[\eta_{1i1}\;\eta_{1i2}\;\eta_{1i3}\;\zeta_{1i1}\;\zeta_{1i2}\;\zeta_{1i3}\big],\\
\end{aligned}
\end{equation*}
\begin{equation}\label{notation-1}
\begin{aligned}
	R^1_{ci}&:=\begin{pmatrix}
		R_{2i1}+\eta^\top_{1i1}R_{2i}\eta_{1i1}& \eta^\top_{1i1}R_{2i}\eta_{1i2} & \eta^\top_{1i1}R_{2i}\eta_{1i3}\\
		\eta^\top_{1i2}R_{2i}\eta_{1i1}& R_{2i2}+\eta^\top_{1i2}R_{2i}\eta_{1i2} & \eta^\top_{1i2}R_{2i}\eta_{1i3}\\
		\eta^\top_{1i3}R_{2i}\eta_{1i1}& \eta^\top_{1i3}R_{2i}\eta_{1i2} & R_{2i3}+\eta^\top_{1i3}R_{2i}\eta_{1i3}
	\end{pmatrix},\\
	R^2_{ci}&:=
	\begin{pmatrix}
		\eta^\top_{1i1}R_{2i}\zeta_{1i1} & \eta^\top_{1i1}R_{2i}\zeta_{1i2} & \eta^\top_{1i1}R_{2i}\zeta_{1i3}\\
		\eta^\top_{1i2}R_{2i}\zeta_{1i1} & \eta^\top_{1i2}R_{2i}\zeta_{1i2} & \eta^\top_{1i2}R_{2i}\zeta_{1i3}\\
		\eta^\top_{1i3}R_{2i}\zeta_{1i1} & \eta^\top_{1i3}R_{2i}\zeta_{1i2} & \eta^\top_{1i3}R_{2i}\zeta_{1i3}\\
	\end{pmatrix},\qquad
	R_{ci}=
	\begin{pmatrix}
		R^1_{ci}&R^2_{ci}\\
		R^3_{ci}&R^4_{ci}\\
	\end{pmatrix},\\
	R^3_{ci}&:=
	\begin{pmatrix}
		\zeta^\top_{1i1}R_{2i}\eta_{1i1}& \zeta^\top_{1i1}R_{2i}\eta_{1i2} & \zeta^\top_{1i1}R_{2i}\eta_{1i3} \\
		\zeta^\top_{1i2}R_{2i}\eta_{1i1}& \zeta^\top_{1i2}R_{2i}\eta_{1i2} & \zeta^\top_{1i2}R_{2i}\eta_{1i3} \\
		\zeta^\top_{1i3}R_{2i}\eta_{1i1}& \zeta^\top_{1i3}R_{2i}\eta_{1i2} & \zeta^\top_{1i3}R_{2i}\eta_{1i3} \\
	\end{pmatrix} ,\\
	R^4_{ci}&:=
	\begin{pmatrix}
		\bar{R}_{21i}+\zeta^\top_{1i1}R_{2i}\zeta_{1i1} & \zeta^\top_{1i1}R_{2i}\zeta_{1i2} & \zeta^\top_{1i1}R_{2i}\zeta_{1i3}\\
		\zeta^\top_{1i2}R_{2i}\zeta_{1i1} & \bar{R}_{22i}+\zeta^\top_{1i2}R_{2i}\zeta_{1i2} & \zeta^\top_{1i2}R_{2i}\zeta_{1i3}\\
		\zeta^\top_{1i3}R_{2i}\zeta_{1i1} & \zeta^\top_{1i3}R_{2i}\zeta_{1i2} &\bar{R}_{23i}+\zeta^\top_{1i3}R_{2i}\zeta_{1i3} \\
	\end{pmatrix}.
\end{aligned}
\end{equation}

Next, Managerial Level 2 need to find the Nash equilibrium
$$(u^+_{c1}(\cdot),u^+_{c2}(\cdot))\in L_{\mathbb{F}}^2(0,T;\mathbb{R}^{\sum_{j=1}^{3}(m_{21j}+m_{3j1})})\times L_{\mathbb{F}}^2(0,T;\mathbb{R}^{\sum_{j=1}^{3}(m_{22j}+m_{3j2})}) ,$$
such that
\begin{equation}
\begin{aligned}
	\bar{J}^2_1(u^+_{c1},u^+_{c2};x_0)\leq\bar{J}^2_1(u_{c1},u^+_{c2};x_0),\\
	\bar{J}^2_2(u^+_{c1},u^+_{c2};x_0)\leq\bar{J}^2_2(u^+_{c1},u_{c2};x_0),
\end{aligned}
\end{equation}
for any $(u_{c1}(\cdot),u_{c2}(\cdot))\in L_{\mathbb{F}}^2(0,T;\mathbb{R}^{\sum_{j=1}^{3}(m_{21j}+m_{3j1})})\times L_{\mathbb{F}}^2(0,T;\mathbb{R}^{\sum_{j=1}^{3}(m_{22j}+m_{3j2})})$.

\begin{Remark}
Although the worst-case disturbance $v^*(\cdot)$ is not substituted into the state equation, but rather (\ref{v}), we will see later that $v^*(\tilde{x})(\cdot)$ is indeed the worst-case disturbance when the incentive form (\ref{Gamma 1i}) becomes the incentive strategy set (Remark \ref{remark-3.4}), i.e.,
	\[\qquad\qquad v^*(\tilde{x})(t)=v^*(t),\quad t\in[0,T],\quad\mathbb{P}\mbox{-}a.s.\]
\end{Remark}

\begin{mydef}
Let $F(g)$ be a \textit{real-valued functional} of $g\in L_{\mathbb{F}}^2(0,T;\mathbb{R}^n)$. If $F(g)\geq 0$ for all $g\in L_{\mathbb{F}}^2(0,T;\mathbb{R}^n)$, $F(\cdot)$ is said to be \textit{positive semidefinite}. If furthermore, $F(g)> 0$ for all $g\neq 0$, $F$ is said to be \textit{positive definite}.
\end{mydef}

\begin{mylem}\label{lemma3.1}
Let (A1)-(A2) hold. For any $x_0\in\mathbb{R}^n$, $\bar{J}^2_i(u^+_{-ci}(u_{ci});x_0)$ is convex (resp., strictly convex) in $u_{ci}(\cdot)\in L_{\mathbb{F}}^2(0,T;\mathbb{R}^{\sum_{j=1}^{3}(m_{2ij}+m_{3ji})})$ if and only if $J_i'(g_i(\cdot))$ is positive semidefinite (resp., positive definite), where
$$
	J_i'(g_i(\cdot)):=\mathbb{E}\left\lbrace \int_0^T\left[\left\langle \bar{Q}_{2i}z'_i,z'_i\right\rangle +\,2 \left\langle z'_i,S_{2i}g_i\right\rangle+\left\langle R_{ci}g_i,g_i\right\rangle\right] dt
    +\left\langle G_{2i}z'_i(T),z'_i(T)\right\rangle \right\rbrace,
$$
and $z'_i(\cdot)$ satisfies:
\begin{equation}\label{auxiliary J'}
\left\{\begin{aligned}
	dz'_i(t)&=[\bar{A}z'_i+\bar{B}_{ci}g_i]dt+[\bar{C}z'_i+\bar{D}_{ci}g_i]dW ,\\
	z'_i(0)&=0.
\end{aligned}\right.
\end{equation}
\end{mylem}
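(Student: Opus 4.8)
The plan is to exploit the fact that, once the opponent's control $u^+_{-ci}(\cdot)$ and the initial datum $x_0$ are frozen, the map $u_{ci}(\cdot)\mapsto\bar{J}^2_i(u^+_{-ci}(u_{ci});x_0)$ is a \emph{quadratic} functional on the Hilbert space $L_{\mathbb{F}}^2(0,T;\mathbb{R}^{\sum_{j=1}^3(m_{2ij}+m_{3ji})})$, whose homogeneous second-order part is precisely $J_i'(\cdot)$. Convexity of a quadratic functional is equivalent to nonnegativity of its second-order part, and this is exactly the asserted characterization.

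First I would freeze $u_{-ci}=u^+_{-ci}$ in the state equation (\ref{state equation2}) and invoke the linearity of that SDE in the control. Setting $g_i:=u_{ci}$ and decomposing, I would split the trajectory as $x(\cdot)=\chi_i(\cdot)+z_i'(\cdot)$, where $\chi_i$ is the response to $u_{ci}\equiv0$ (carrying the inhomogeneity from $x_0$ and $u^+_{-ci}$) and $z_i'$ solves the homogeneous-initial equation (\ref{auxiliary J'}) driven by $g_i$. The point to stress is that $g_i\mapsto z_i'$ is \emph{linear} and \emph{independent} of both $x_0$ and $u^+_{-ci}$.

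Next I would substitute $x=\chi_i+z_i'$ into the cost (\ref{cost 2}) and expand the running and terminal quadratic terms, collecting by degree in $g_i$. This yields
\[
\bar{J}^2_i(u^+_{-ci}(u_{ci});x_0)=J_i'(g_i)+\ell_i(g_i)+c_i,
\]
where $J_i'(g_i)$ is exactly the homogeneous quadratic expression in the statement (those terms carrying two factors drawn from $z_i'$ and $g_i$), $\ell_i(\cdot)$ is a continuous linear functional (the cross terms pairing $\chi_i$ against $z_i'$ and $g_i$), and $c_i$ depends on $x_0$ and $u^+_{-ci}$ but not on $g_i$. Confirming that the second-order part is \emph{precisely} $J_i'$, with no first-order contribution leaking in, is the one computational point that requires care; it is nonetheless a direct bookkeeping exercise once the linear splitting is in hand, and it is where one should be careful that the weights $\bar{Q}_{2i},S_{2i},R_{ci},G_{2i}$ are matched correctly against the surviving degree-two monomials.

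Finally, for any two admissible controls $u_{ci}^{(1)},u_{ci}^{(2)}$ and $\lambda\in[0,1]$, linearity of $g_i\mapsto z_i'$ and the symmetric bilinear structure of $J_i'$ give the identity
\[
\bar{J}^2_i(\lambda u_{ci}^{(1)}+(1-\lambda)u_{ci}^{(2)})-\lambda\bar{J}^2_i(u_{ci}^{(1)})-(1-\lambda)\bar{J}^2_i(u_{ci}^{(2)})=-\lambda(1-\lambda)\,J_i'(u_{ci}^{(1)}-u_{ci}^{(2)}),
\]
in which $\ell_i$ and $c_i$ cancel. The left side is $\le0$ for all $\lambda\in[0,1]$ and all pairs if and only if $J_i'(g_i)\ge0$ for every $g_i$ (take $g_i=u_{ci}^{(1)}-u_{ci}^{(2)}$, which is arbitrary), giving the convex $\Leftrightarrow$ positive semidefinite equivalence; sharpening $\le$ to $<$ for $\lambda\in(0,1)$ and $u_{ci}^{(1)}\neq u_{ci}^{(2)}$ gives the strictly convex $\Leftrightarrow$ positive definite equivalence. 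Since the right-hand side is independent of $x_0$ and $u^+_{-ci}$, the characterization holds simultaneously for every $x_0$, matching the ``for any $x_0$'' phrasing of the statement.
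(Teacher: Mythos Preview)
Your proposal is correct and follows essentially the same route as the paper's own proof: both establish the key identity
\[
\lambda\,\bar{J}^2_i(u_{ci}^{(1)})+(1-\lambda)\,\bar{J}^2_i(u_{ci}^{(2)})-\bar{J}^2_i\big(\lambda u_{ci}^{(1)}+(1-\lambda)u_{ci}^{(2)}\big)=\lambda(1-\lambda)\,J_i'\big(u_{ci}^{(1)}-u_{ci}^{(2)}\big),
\]
by exploiting linearity of (\ref{state equation2}) in $u_{ci}$, and then read off the equivalence. The only cosmetic difference is that you first pass through the decomposition $x=\chi_i+z_i'$ and the representation $\bar{J}^2_i=J_i'+\ell_i+c_i$, whereas the paper computes the convexity defect directly by setting $z_i'=x_1-x_2$ for the two trajectories; the substance is identical.
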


\begin{proof}
We consider the case $i=1$, and the proof is the same when $i=2$. For any $x_0\in\mathbb{R}^n$, fix $u^+_{c2}$, let $x_1(\cdot)$, $x_2(\cdot)$ be the states of (\ref{state equation2}) corresponding $u^1_{c1}(\cdot)$, $u^2_{c1}(\cdot)$, respectively. Taking any $\lambda_1\in[0,1]$ and denoting $\lambda_2:=1-\lambda_1$, we get
\begin{equation*}
\begin{aligned}
	&\lambda_1\bar{J}^2_1(u^1_{c1},u^+_{c2};x_0)+\lambda_2\bar{J}^2_1(u^2_{c1},u^+_{c2};x_0)-\bar{J}^2_1(\lambda_1u^1_{c1}+\lambda_2u^2_{c1},u^+_{c2};x_0)  \\
	&=\lambda_1\lambda_2\mathbb{E}\bigg\{ \int_0^T\left[\left\langle \bar{Q}_{21}(x_1-x_2),x_1-x_2\right\rangle +\,2 \left\langle x_1-x_2,S_{21}(u^1_{c1}-u^2_{c1})\right\rangle\right.\\
    &\qquad\qquad\qquad \left.+\left\langle R_{c1}(u^1_{c1}-u^2_{c1}),u^1_{c1}-u^2_{c1}\right\rangle\right] dt +\left\langle G_{21}(x_1-x_2)(T),(x_1-x_2)(T)\right\rangle\bigg\}.
\end{aligned}
\end{equation*}
Denote $g_1:=u^1_{c1}-u^2_{c1}$, $z'_1:=x_1-x_2$. Therefore, $z'_1(\cdot)$ is deterministic and satisfies (\ref{auxiliary J'}). Hence
$$
	\lambda_1\bar{J}^2_1(u^1_{c1},u^+_{c2};x_0)+\lambda_2\bar{J}^2_1(u^2_{c1},u^+_{c2};x_0)-\bar{J}^2_1(\lambda_1u^1_{c1}+\lambda_2u^2_{c1},u^+_{c2};x_0) =\lambda_1\lambda_2J_1'(g_1(\cdot)),
$$
and the lemma follows.
\end{proof}

\begin{Remark}
When the coefficient matrix $R_{ci}(\cdot)$ is sufficiently large, the convexity of $\bar{J}^2_1(u_{c1},u^+_{c2};\\x_0)$ and $\bar{J}^2_2(u^+_{c1},u_{c2};x_0)$ can usually be ensured.
\end{Remark}

We give the following assumptions:

{\bf (A3)} For $i=1,2$, $R_{ci}(\cdot)\gg0$.

{\bf (A4)} The map $g_i\mapsto J_i'(g_i)$ is uniformly positive definite.

For Managerial Level 2, in order to obtain the ith player's Nash equilibrium strategies, the following result can be derived through by Theorem 2.2.1 of \cite{Sun-Yong20}.

\begin{mypro}\label{prop-3.3}
	Let (A1)-(A4) hold. Suppose the attenuation index $\gamma$ is sufficiently large. Then, $(u^+_{c1}(\cdot),u^+_{c2}(\cdot))$ is an open-loop Nash equilibrium if and only if the stationary condition is satisfied:
\begin{equation*}
\left\{\begin{aligned}
	&S^\top_{21}(t)\tilde{x}(t)+R_{c1}(t)u^+_{c1}(t)+\bar{B}^\top_{c1}(t)\tilde{y}(t)+\bar{D}^\top_{c1}(t)\tilde{z}(t)=0,\quad t\in[0,T],\quad\mathbb{P}\mbox{-}a.s. \\
	&S^\top_{22}(t)\tilde{x}(t)+R_{c2}(t)u^+_{c2}(t)+\bar{B}^\top_{c2}(t)\tilde{Y}(t)+\bar{D}^\top_{c2}(t)\tilde{Z}(t)=0,\quad t\in[0,T],\quad\mathbb{P}\mbox{-}a.s.,
\end{aligned}\right.
\end{equation*}
where $(\tilde{x}(\cdot),\tilde{y}(\cdot),\tilde{z}(\cdot))$ and $(\tilde{x}(\cdot),\tilde{Y}(\cdot),\tilde{Z}(\cdot))$ satisfy the following FBSDEs respectively:
\begin{equation}\label{FBSDE1}
\left\{\begin{aligned}
	d\tilde{x}&=\big[\bar{A}\tilde{x}+\bar{B}_{c1}u^+_{c1}+\bar{B}_{c2}u^+_{c2}\big]dt+\big[\bar{C}\tilde{x}+\bar{D}_{c1}u^+_{c1}+\bar{D}_{c2}u^+_{c2}\big]dW ,\\
	d\tilde{y}&=-\big[\bar{A}^\top\tilde{y}+\bar{C}^\top\tilde{z}+\bar{Q}_{21}\tilde{x}+S_{21}u^+_{c1}\big]+\tilde{z}dW,\\
	\tilde{x}(0)&=x_0,\quad\tilde{y}(T)=G_{21}\tilde{x}(T),\\
\end{aligned}\right.
\end{equation}
\begin{equation}\label{FBSDE2}
\left\{\begin{aligned}
	d\tilde{x}&=\big[\bar{A}\tilde{x}+\bar{B}_{c1}u^+_{c1}+\bar{B}_{c2}u^+_{c2}\big]dt+\big[\bar{C}+\bar{D}_{c1}u^+_{c1}+\bar{D}_{c2}u^+_{c2}\big]dW ,\\
	d\tilde{Y}&=-\big[\bar{A}^\top\tilde{Y}+\bar{C}^\top\tilde{Z}+\bar{Q}_{22}\tilde{x}+S_{22}u^+_{c2}\big]+\tilde{Z}dW,\\
	\tilde{x}(0)&=x_0,\quad\tilde{Y}(T)=G_{22}\tilde{x}(T).
\end{aligned}\right.
\end{equation}
Moreover,  the Nash equilibrium becomes
\begin{equation}\label{Nash equilibrium2}
\left\{\begin{aligned}
    u^+_{c1}(t)&=-R^{-1}_{c1}(t)\big[S^\top_{21}(t)\tilde{x}(t)+\bar{B}^\top_{c1}(t)\tilde{y}(t)+\bar{D}^\top_{c1}(t)\tilde{z}(t)\big],\quad t\in[0,T],\quad\mathbb{P}\mbox{-}a.s. \\
	u^+_{c2}(t)&=-R^{-1}_{c2}(t)\big[S^\top_{22}(t)\tilde{x}(t)+\bar{B}^\top_{c2}(t)\tilde{Y}(t)+\bar{D}^\top_{c2}(t)\tilde{Z}(t)\big],\quad t\in[0,T],\quad\mathbb{P}\mbox{-}a.s..
\end{aligned}\right.
\end{equation}
\end{mypro}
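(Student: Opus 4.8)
The plan is to exploit the defining property of a Nash equilibrium: the pair $(u^+_{c1},u^+_{c2})$ is an equilibrium precisely when, for each $i$, the control $u^+_{ci}$ minimizes the single functional $u_{ci}\mapsto\bar{J}^2_i(u^+_{-ci}(u_{ci});x_0)$ with the opponent frozen at its equilibrium value. Freezing $u_{c(3-i)}=u^+_{c(3-i)}$ turns the dynamics (\ref{state equation2}) into an SLQ state equation driven only by $u_{ci}$ (with a fixed inhomogeneous input from the opponent) and turns (\ref{cost 2}) into a quadratic functional of $u_{ci}$ carrying the state--control cross term $S_{2i}$. Each sub-problem is therefore a standard stochastic LQ optimal control problem, and I would reduce the proposition to two applications of the SLQ maximum principle of Theorem 2.2.1 in \cite{Sun-Yong20}, one for $i=1$ and one for $i=2$.

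For necessity I would run a first-order variation on $\bar{J}^2_1$. Perturbing $u^+_{c1}$ to $u^+_{c1}+\epsilon g_1$ produces a state variation $z'_1$ solving (\ref{auxiliary J'}), and differentiating in $\epsilon$ at $0$ gives the G\^ateaux derivative as an expectation containing the $z'_1$-linear terms $\langle\bar{Q}_{21}\tilde{x},z'_1\rangle$, $\langle z'_1,S_{21}u^+_{c1}\rangle$, $\langle G_{21}\tilde{x}(T),z'_1(T)\rangle$ together with the $g_1$-linear terms $\langle S^\top_{21}\tilde{x},g_1\rangle$ and $\langle R_{c1}u^+_{c1},g_1\rangle$. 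The key step is to eliminate the $z'_1$-terms: I would introduce the adjoint pair $(\tilde{y},\tilde{z})$ via the BSDE in (\ref{FBSDE1})---whose driver is built to contain exactly $\bar{Q}_{21}\tilde{x}+S_{21}u^+_{c1}$ and whose terminal value $\tilde{y}(T)=G_{21}\tilde{x}(T)$ matches the terminal cost---and apply It\^o's formula to $\langle\tilde{y},z'_1\rangle$ on $[0,T]$. The $\bar{A},\bar{C}$ duality terms cancel, the $\bar{Q}_{21}\tilde{x}$ and $S_{21}u^+_{c1}$ contributions absorb the $z'_1$-terms of the derivative, and only $2\,\mathbb{E}\int_0^T\langle S^\top_{21}\tilde{x}+R_{c1}u^+_{c1}+\bar{B}^\top_{c1}\tilde{y}+\bar{D}^\top_{c1}\tilde{z},\,g_1\rangle\,dt$ survives. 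As $g_1$ is arbitrary, the fundamental lemma of the calculus of variations forces the first stationarity identity $dt\otimes d\mathbb{P}$-a.e.; the identical computation with $(\tilde{Y},\tilde{Z})$ and (\ref{FBSDE2}) yields the second. Solving each identity for $u^+_{ci}$ using $R_{ci}\gg0$ from (A3) gives the feedback-from-adjoint formula (\ref{Nash equilibrium2}).

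For sufficiency I would use that $\bar{J}^2_1(\,\cdot\,,u^+_{c2};x_0)$ is exactly quadratic in $u_{c1}$, so the expansion around $u^+_{c1}$ is exact: $\bar{J}^2_1(u^+_{c1}+g_1,u^+_{c2};x_0)=\bar{J}^2_1(u^+_{c1},u^+_{c2};x_0)+2\,\mathbb{E}\int_0^T\langle S^\top_{21}\tilde{x}+R_{c1}u^+_{c1}+\bar{B}^\top_{c1}\tilde{y}+\bar{D}^\top_{c1}\tilde{z},g_1\rangle\,dt+J'_1(g_1)$, where the quadratic remainder is precisely the functional $J'_1(g_1)$ of Lemma \ref{lemma3.1}. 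When the stationarity condition holds the linear term vanishes and, by (A4), $J'_1(g_1)\geq0$, so $u^+_{c1}$ is a global minimizer; conversely optimality forces the linear term to vanish for all $g_1$, recovering the stationarity condition. The role of ``$\gamma$ sufficiently large'' together with (A3)--(A4) is to guarantee that the coupled FBSDEs admit unique adapted solutions and that the reduced problems are genuinely (uniformly) convex.

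I expect the main obstacle to be bookkeeping the cross term $S_{2i}$ consistently: unlike the classical LQ case it appears simultaneously in the adjoint driver (as $S_{2i}u^+_{ci}$), in the stationarity relation (as $S^\top_{2i}\tilde{x}$), and in the quadratic form $J'_i$, and the It\^o cancellation closes only if these three occurrences match exactly. A secondary, conceptual point is to keep the two adjoint systems $(\tilde{y},\tilde{z})$ and $(\tilde{Y},\tilde{Z})$ separate while they share the common forward state $\tilde{x}$, so that the equilibrium is characterized by a single forward equation coupled to two backward equations.
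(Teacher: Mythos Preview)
Your proposal is correct and follows exactly the route the paper takes: the paper simply invokes Theorem~2.2.1 of \cite{Sun-Yong20} for each player's SLQ subproblem with the opponent's control frozen, which is precisely the variational/adjoint argument you outline in detail. Your treatment of the cross term $S_{2i}$ and the coupling of the shared forward state $\tilde{x}$ with the two separate backward systems is accurate and matches the intended reduction.
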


To obtain the closed-loop representation of Nash equilibrium, plugging (\ref{Nash equilibrium2}) into (\ref{FBSDE1}) and (\ref{FBSDE2}), we have the following Hamiltonian system:
\begin{equation}\label{Hamiltonian-2}
\left\{\begin{aligned}
	d\tilde{x}=&\big[(\bar{A}-\bar{B}_{c1}R^{-1}_{c1}S^\top_{21}-\bar{B}_{c2}R^{-1}_{c2}S^\top_{22})\tilde{x}-\bar{B}_{c1}R^{-1}_{c1}\bar{B}^\top_{c1}\tilde{y}-\bar{B}_{c1}R^{-1}_{c1}\bar{D}^\top_{c1}\tilde{z}\\
    &\ -\bar{B}_{c2}R^{-1}_{c2}\bar{B}^\top_{c2}\tilde{Y}-\bar{B}_{c2}R^{-1}_{c2}\bar{D}^\top_{c2}\tilde{Z}\big]dt \\ &+\big[(\bar{C}-\bar{D}_{c1}R^{-1}_{c1}S^\top_{21}-\bar{D}_{c2}R^{-1}_{c2}S^\top_{22})\tilde{x}-\bar{D}_{c1}R^{-1}_{c1}\bar{B}^\top_{c1}\tilde{y}-\bar{D}_{c1}R^{-1}_{c1}\bar{D}^\top_{c1}\tilde{z}\\
    &\qquad -\bar{D}_{c2}R^{-1}_{c2}\bar{B}^\top_{c2}\tilde{Y}-\bar{D}_{c2}R^{-1}_{c2}\bar{D}^\top_{c2}\tilde{Z}\big]dW ,\\
	d\tilde{y}=&-\big[(\bar{A}-\bar{B}_{c1}R^{-1}_{c1}S^\top_{21})^\top\tilde{y}+(\bar{C}-\bar{D}_{c1}R^{-1}_{c1}S^\top_{21})^\top\tilde{z}+(\bar{Q}_{21}-S_{21}R^{-1}_{c1}S^\top_{21})\tilde{x}\big]+\tilde{z}dW,\\
	d\tilde{Y}=&-\big[(\bar{A}-\bar{B}_{c2}R^{-1}_{c2}S^\top_{22})^\top\tilde{Y}+(\bar{C}-\bar{D}_{c2}R^{-1}_{c2}S^\top_{22})^\top\tilde{Z}+(\bar{Q}_{22}-S_{22}R^{-1}_{c2}S^\top_{22})\tilde{x}\big]+\tilde{Z}dW,\\
	\tilde{x}(0)=&x_0,\quad\tilde{y}(T)=G_{21}\tilde{x}(T),\quad\tilde{Y}(T)=G_{22}\tilde{x}(T).
	\end{aligned}\right.
\end{equation}

Define
\begin{equation}\label{notation-2}
\begin{aligned}
	\mathbb{Y}&:=
	\begin{pmatrix}
		\tilde{y}	\\\tilde{Y}
	\end{pmatrix},\quad
	\mathbb{Z}:=
	\begin{pmatrix}
		\tilde{z}	\\\tilde{Z}
	\end{pmatrix},\quad
	\mathbb{B}_1:=
	\begin{pmatrix}
		-\bar{B}_{c1}R^{-1}_{c1}\bar{B}^\top_{c1}&-\bar{B}_{c2}R^{-1}_{c2}\bar{B}^\top_{c2}
	\end{pmatrix},\\
	\mathbb{B}_2&:=
	\begin{pmatrix}
		-\bar{B}_{c1}R^{-1}_{c1}\bar{D}^\top_{c1}&-\bar{B}_{c2}R^{-1}_{c2}\bar{D}^\top_{c2}
	\end{pmatrix},\quad
	\mathbb{D}_1:=
	\begin{pmatrix}
		-\bar{D}_{c1}R^{-1}_{c1}\bar{B}^\top_{c1}&-\bar{D}_{c2}R^{-1}_{c2}\bar{B}^\top_{c2}
	\end{pmatrix},\\
	\mathbb{D}_2&:=
	\begin{pmatrix}
		-\bar{D}_{c1}R^{-1}_{c1}\bar{D}^\top_{c1}&-\bar{D}_{c2}R^{-1}_{c2}\bar{D}^\top_{c2}
	\end{pmatrix},\quad
	\mathbb{A}:=
	\begin{pmatrix}
		\bar{A}-\bar{B}_{c1}R^{-1}_{c1}S^\top_{21}&0 \\
		0&\bar{A}-\bar{B}_{c2}R^{-1}_{c2}S^\top_{22}
	\end{pmatrix},\\
	\mathbb{C}&:=
	\begin{pmatrix}
		\bar{C}-\bar{D}_{c1}R^{-1}_{c1}S^\top_{21}&0 \\
		0&\bar{C}-\bar{D}_{c2}R^{-1}_{c2}S^\top_{22}
	\end{pmatrix},\quad
	\mathbb{Q}:=
	\begin{pmatrix}
		\bar{Q}_{21}-S_{21}R^{-1}_{c1}S^\top_{21}\\\bar{Q}_{22}-S_{22}R^{-1}_{c2}S^\top_{22}
	\end{pmatrix},\quad
	\mathbb{G}:=
	\begin{pmatrix}
		G_{21}\\G_{22}
	\end{pmatrix}.
\end{aligned}
\end{equation}
Then (\ref{Hamiltonian-2}) can be rewrite as follows:
\begin{equation}
\left\{\begin{aligned}
    d\tilde{x}&=\big[(\bar{A}-\bar{B}_{c1}R^{-1}_{c1}S^\top_{21}-\bar{B}_{c2}R^{-1}_{c2}S^\top_{22})\tilde{x}+\mathbb{B}_1\mathbb{Y}+\mathbb{B}_2\mathbb{Z}\big]dt \\
    &\quad +\big[(\bar{C}-\bar{D}_{c1}R^{-1}_{c1}S^\top_{21}-\bar{D}_{c2}R^{-1}_{c2}S^\top_{22})\tilde{x}+\mathbb{D}_1\mathbb{Y}+\mathbb{D}_2\mathbb{Z}\big]dW, \\
    d\mathbb{Y}&=-[\mathbb{A}^\top\mathbb{Y}+\mathbb{C}^\top\mathbb{Z}+\mathbb{Q}\tilde{x}]dt+\mathbb{Z}dW,\\
    \tilde{x}(0)&=x_0,\quad \mathbb{Y}(T)=\mathbb{G}\tilde{x}(T).
\end{aligned}\right.
\end{equation}

\begin{mypro}
Suppose that the following equation
\begin{equation}\label{Pi equation}
\left\{\begin{aligned}
	&\dot{\Pi}+\Pi\bigg(\bar{A}-\sum_{i=1}^2\bar{B}_{ci}R^{-1}_{ci}S^\top_{2i}\bigg)+\Pi\mathbb{B}_1\Pi+\mathbb{A}^\top\Pi+\mathbb{Q}+(\mathbb{C}^\top+\Pi\mathbb{B}_2)(I-\Pi\mathbb{D}_2)^{-1} \\
	&\ \times\bigg(\Pi\bar{C}-\sum_{i=1}^2\Pi\bar{D}_{ci}R^{-1}_{ci}S^\top_{2i}+\Pi\mathbb{D}_1\Pi\bigg)=0,\\
	&\Pi(T)=\mathbb{G},
\end{aligned}\right.
\end{equation}
admits a solution $\Pi(\cdot)$ such that $I-\Pi\mathbb{D}_2$ has bounded inverse, then the Nash equilibrium (\ref{Nash equilibrium2}) has the following representation:
\begin{equation}\label{Nash equilibrium2 sfb}
\begin{aligned}
	u^+_{c1}(t)&=-R^{-1}_{c1}\bigg[S^\top_{21}+\left (\bar{B}^\top_{c1}\quad0\right)\Pi+\left (\bar{D}^\top_{c1}\quad0\right)(I-\Pi\mathbb{D}_2)^{-1}\\
    &\qquad\qquad \times\bigg(\Pi\bar{C}-\sum_{i=1}^2\Pi\bar{D}_{ci}R^{-1}_{ci}S^\top_{2i}+\Pi\mathbb{D}_1\Pi\bigg)\bigg]\tilde{x}(t),\\
	u^+_{c2}(t)&=-R^{-1}_{c2}\bigg[S^\top_{22}+\left (0\quad\bar{B}^\top_{c2}\right)\Pi+\left (0\quad\bar{D}^\top_{c2}\right)(I-\Pi\mathbb{D}_2)^{-1}\\
    &\qquad\qquad \times\bigg(\Pi\bar{C}-\sum_{i=1}^2\Pi\bar{D}_{ci}R^{-1}_{ci}S^\top_{2i}+\Pi\mathbb{D}_1\Pi\bigg)\bigg]\tilde{x}(t),
\end{aligned}
\end{equation}
where $\tilde{x}(\cdot)$ satisfies
\begin{equation}
\left\{\begin{aligned}
    d\tilde{x}&=\bigg[\bar{A}-\bar{B}_{c1}R^{-1}_{c1}S^\top_{21}-\bar{B}_{c2}R^{-1}_{c2}S^\top_{22}+\mathbb{B}_1\Pi\\
    &\qquad +\mathbb{B}_2(I-\Pi\mathbb{D}_2)^{-1}\bigg(\Pi\bar{C}-\sum_{i=1}^2\Pi\bar{D}_{ci}R^{-1}_{ci}S^\top_{2i}+\Pi\mathbb{D}_1\Pi\bigg)\bigg]\tilde{x}dt \\	
    &\quad +\bigg[\bar{C}-\bar{D}_{c1}R^{-1}_{c1}S^\top_{21}-\bar{D}_{c2}R^{-1}_{c2}S^\top_{22}+\mathbb{D}_1\Pi\\
    &\qquad\quad +\mathbb{D}_2(I-\Pi\mathbb{D}_2)^{-1}(\Pi\bar{C}-\sum_{i=1}^2\Pi\bar{D}_{ci}R^{-1}_{ci}S^\top_{2i}+\Pi\mathbb{D}_1\Pi)\big]\tilde{x}dW, \\
	\tilde{x}(0)&=x_0.
\end{aligned}\right.
\end{equation}
\end{mypro}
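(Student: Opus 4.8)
The plan is to decouple the fully coupled Hamiltonian system (\ref{Hamiltonian-2}), written in the stacked form afforded by the notation (\ref{notation-2}), by seeking a deterministic matrix-valued function $\Pi(\cdot)$ and positing the ansatz $\mathbb{Y}(t)=\Pi(t)\tilde{x}(t)$. Since $\mathbb{Y}=\mathbf{col}[\tilde{y}\;\tilde{Y}]$ is $2n$-dimensional while $\tilde{x}$ is $n$-dimensional, $\Pi(\cdot)$ must be a $2n\times n$ matrix, and the terminal condition $\mathbb{Y}(T)=\mathbb{G}\tilde{x}(T)$ forces $\Pi(T)=\mathbb{G}$, matching the boundary data of (\ref{Pi equation}). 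First I would apply It\^o's formula to $\Pi\tilde{x}$ using the forward equation for $\tilde{x}$, and then compare the resulting drift and diffusion coefficients with those of $d\mathbb{Y}=-[\mathbb{A}^\top\mathbb{Y}+\mathbb{C}^\top\mathbb{Z}+\mathbb{Q}\tilde{x}]dt+\mathbb{Z}dW$, exactly as was done for the single-leader case in the proof of Proposition \ref{prop-3.2}.

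The crucial algebraic step is the extraction of $\mathbb{Z}$. Matching the $dW$ coefficients gives $\mathbb{Z}=\Pi[(\bar{C}-\sum_{i=1}^2\bar{D}_{ci}R^{-1}_{ci}S^\top_{2i})\tilde{x}+\mathbb{D}_1\mathbb{Y}+\mathbb{D}_2\mathbb{Z}]$; substituting $\mathbb{Y}=\Pi\tilde{x}$ and collecting the terms containing $\mathbb{Z}$ yields $(I-\Pi\mathbb{D}_2)\mathbb{Z}=(\Pi\bar{C}-\sum_{i=1}^2\Pi\bar{D}_{ci}R^{-1}_{ci}S^\top_{2i}+\Pi\mathbb{D}_1\Pi)\tilde{x}$. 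This is precisely where the standing hypothesis that $I-\Pi\mathbb{D}_2$ has bounded inverse enters: inverting produces the closed-loop expression for $\mathbb{Z}$ in terms of $\tilde{x}$. I expect this implicit-in-$\mathbb{Z}$ closure, caused by the controls entering the diffusion through the $\bar{D}_{ci}$ blocks, to be the main obstacle, both because the inversion must be justified and because the $2n\times n$ shape of $\Pi$ together with the $n\times 2n$ shapes of $\mathbb{B}_2$ and $\mathbb{D}_2$ must be tracked carefully so that every product (in particular $\Pi\mathbb{D}_2$, $\Pi\mathbb{B}_1\Pi$, $\Pi\mathbb{D}_1\Pi$) is conformable.

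With $\mathbb{Z}$ eliminated, I would match the $dt$ coefficients: the drift of $d(\Pi\tilde{x})$ equals $[\dot{\Pi}+\Pi(\bar{A}-\sum_{i=1}^2\bar{B}_{ci}R^{-1}_{ci}S^\top_{2i})+\Pi\mathbb{B}_1\Pi]\tilde{x}+\Pi\mathbb{B}_2\mathbb{Z}$, and this must coincide with $-[\mathbb{A}^\top\Pi\tilde{x}+\mathbb{C}^\top\mathbb{Z}+\mathbb{Q}\tilde{x}]$. Substituting the closed-loop form of $\mathbb{Z}$ and grouping the coefficient $(\mathbb{C}^\top+\Pi\mathbb{B}_2)$ of $\mathbb{Z}$ reproduces exactly the matrix Riccati equation (\ref{Pi equation}). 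Since the given $\Pi(\cdot)$ solves that equation, defining $\tilde{x}(\cdot)$ by the stated closed-loop SDE and setting $\mathbb{Y}=\Pi\tilde{x}$, $\mathbb{Z}=(I-\Pi\mathbb{D}_2)^{-1}(\Pi\bar{C}-\sum_{i=1}^2\Pi\bar{D}_{ci}R^{-1}_{ci}S^\top_{2i}+\Pi\mathbb{D}_1\Pi)\tilde{x}$ yields, on reversing the computation, a solution of (\ref{Hamiltonian-2}); by the uniqueness provided through Proposition \ref{prop-3.3} it coincides with the Nash equilibrium. Finally I would read off (\ref{Nash equilibrium2 sfb}) from (\ref{Nash equilibrium2}) by writing $\bar{B}^\top_{c1}\tilde{y}=(\bar{B}^\top_{c1}\;\;0)\Pi\tilde{x}$ and $\bar{D}^\top_{c1}\tilde{z}=(\bar{D}^\top_{c1}\;\;0)\mathbb{Z}$ for the first player, and the analogous selections $(0\;\;\bar{B}^\top_{c2})$, $(0\;\;\bar{D}^\top_{c2})$ for the second, then inserting the closed-loop form of $\mathbb{Z}$.
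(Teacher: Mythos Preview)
Your proposal is correct and follows exactly the decoupling approach of the paper: posit $\mathbb{Y}=\Pi\tilde{x}$, match the diffusion coefficients to solve for $\mathbb{Z}=(I-\Pi\mathbb{D}_2)^{-1}(\Pi\bar{C}-\sum_{i=1}^2\Pi\bar{D}_{ci}R^{-1}_{ci}S^\top_{2i}+\Pi\mathbb{D}_1\Pi)\tilde{x}$, then match the drifts to recover (\ref{Pi equation}), and finally insert the block selectors $(\bar{B}^\top_{ci}\;0)$, $(\bar{D}^\top_{ci}\;0)$ into (\ref{Nash equilibrium2}). The paper's own proof merely states the ansatz, records the resulting $\mathbb{Z}$, and defers the remaining computation to the proof of Proposition~\ref{prop-3.2}, so you have in fact written out more detail than the authors do.
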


\begin{proof}
Let $\mathbb{Y}=\Pi\tilde{x}$ with $\Pi=\begin{pmatrix}
		\Pi_1\\\Pi_2
	\end{pmatrix}$, we have
\begin{equation*}
	\mathbb{Z}=(I-\Pi\mathbb{D}_2)^{-1}\bigg(\Pi\bar{C}-\sum_{i=1}^{2}\Pi\bar{D}_{ci}R^{-1}_{ci}S^\top_{2i}+\Pi\mathbb{D}_1\Pi\bigg)\tilde{x}=\Sigma\tilde{x},
\end{equation*}
with  $\Sigma=\begin{pmatrix}
	\Sigma_1\\\Sigma_2
\end{pmatrix}$, and $\Pi$ satisfies (\ref{Pi equation}), the proof is similar to that of  Proposition \ref{prop-3.2}, we omit the details here.
\end{proof}
For (\ref{Gamma 1i}) to become the incentive strategy of Decision-making Level 1, (\ref{Nash equilibrium2 sfb}) must be matched with
the corresponding team-optimal strategies of Managerial Level 2 and Executive Level 3 from some conditions. We assume that the following equation holds:
\begin{equation}\label{equal 1}
	u^+_{ci}(t)=u^*_{ci}(t),\qquad t\in[0,T].
\end{equation}

\begin{Remark}\label{remark-3.4}
When the relation (\ref{equal 1}) holds, we will verify that $\tilde{x}(t)=\bar{x}(t)$. By (\ref{Hamiltonian-1}),
\begin{equation}
\begin{aligned}\label{bar x'}
	d\bar{x}&=[A\bar{x}+B_{c}u^*_{c}+Ev^*]dt+[C\bar{x}+D_{c}u^*_{c}]dW \\
	&=\bigg[(A+\gamma^{-2}EE^\top P)\bar{x}+\sum_{i=1}^2 B_{1i}u^*_{1i}+\sum_{i=1}^2 \sum_{j=1}^3 B_{2ij}u^*_{2ij}+\sum_{j=1}^{3}\sum_{i=1}^{2}B_{3ji}u^*_{3ji}\bigg]dt\\
	&\quad +\bigg[ C\bar{x}+\sum_{i=1}^2 D_{1i}u^*_{1i}+\sum_{i=1}^2 \sum_{j=1}^3 D_{2ij}u^*_{2ij}+\sum_{j=1}^3\sum_{i=1}^2 D_{3ji}u^*_{3ji}\bigg]dW.
\end{aligned}
\end{equation}
Plugging (\ref{u*_1i}) into (\ref{bar x'}), we get
\begin{equation}
\left\{\begin{aligned}
 	d\bar{x}&=\bigg[\bigg(A+\gamma^{-2}EE^\top P+\sum_{i=1}^2\sum_{j=1}^3 B_{1i}\xi_{1ij}\bigg)\bar{x}+\sum_{i=1}^2\sum_{j=1}^3(B_{2ij}+B_{1i}\eta_{1ij})u^*_{2ij}\\
    &\qquad +\sum_{j=1}^3\sum_{i=1}^2(B_{3ji}+B_{1i}\zeta_{1ij})u^*_{3ji}\bigg]dt \\
 	&\quad +\bigg[\bigg(C+\sum_{i=1}^{2}\sum_{j=1}^3D_{1i}\xi_{1ij}\bigg)\bar{x}+\sum_{i=1}^2\sum_{j=1}^3(D_{2ij}+D_{1i}\eta_{1ij})u^*_{2ij}\\
    &\qquad\quad +\sum_{j=1}^3\sum_{i=1}^2(D_{3ji}+D_{1i}\zeta_{1ij})u^*_{3ji}\bigg]dW,\\
 	\bar{x}(0)&=x_0.
\end{aligned}\right.
\end{equation}
By Proposition \ref{prop-3.3} and notations (\ref{notation-1}), we have
\begin{equation}
\left\{\begin{aligned}
	d\tilde{x}&=\bigg[\bigg(A+\gamma^{-2}EE^\top P+\sum_{i=1}^2\sum_{j=1}^3B_{1i}\xi_{1ij}\bigg)\tilde{x}+\sum_{i=1}^2\sum_{j=1}^3(B_{2ij}+B_{1i}\eta_{1ij})u^+_{2ij}\\
    &\qquad +\sum_{j=1}^3\sum_{i=1}^2(B_{3ji}+B_{1i}\zeta_{1ij})u^+_{3ji}\bigg]dt \\
	&\quad +\bigg[\bigg(C+\sum_{i=1}^2\sum_{j=1}^3D_{1i}\xi_{1ij}\bigg)\tilde{x}+\sum_{i=1}^2\sum_{j=1}^3(D_{2ij}+D_{1i}\eta_{1ij})u^+_{2ij}\\
    &\qquad\quad +\sum_{j=1}^3\sum_{i=1}^{2}(D_{3ji}+D_{1i}\zeta_{1ij})u^+_{3ji}\bigg]dW,\\
	\tilde{x}(0)&=x_0,\\
\end{aligned}\right.
\end{equation}
then the subtraction of these two equations yields the following equation:
\begin{equation}
\left\{\begin{aligned}
	d(\tilde{x}-\bar{x})&=\bigg(A+\gamma^{-2}EE^\top P+\sum_{i=1}^2\sum_{j=1}^3B_{1i}\xi_{1ij}\bigg)(\tilde{x}-\bar{x})dt\\
    &\quad +\bigg(C\sum_{i=1}^{2}\sum_{j=1}^{3}D_{1i}\xi_{1ij}\bigg)(\tilde{x}-\bar{x})dW,\\
	(\tilde{x}-\bar{x})(0)&=0.
\end{aligned}\right.
\end{equation}
Obviously, $\tilde{x}-\bar{x}\equiv0$ is the solution to the above equation. Therefore,
$$\tilde{x}(t)=\bar{x}(t),\qquad t\in[0,T],\qquad \mathbb{P}\mbox{-}a.s..$$
\end{Remark}

When the relation (\ref{equal 1}) holds, it can be expressed briefly as follows:
\begin{equation}\label{equality 1}
     \bigg[R^{-1}_{ci}\big(S^\top_{2i}+\bar{B}^\top_{ci}\Pi_i+\bar{D}^\top_{ci}\Sigma_i\big)-R^{-1}_{1i}\big(B^\top_{i}P+D^\top_{i}\Lambda\big)\bigg]\bar{x}(t)=0,\quad i=1,2,\; t\in[0,T],\;\mathbb{P}\mbox{-}a.s..
\end{equation}

\begin{Remark}
In fact, equation (\ref{equality 1}) contains twelve equations, in which $\eta_{1ij}$, $\zeta_{1ij}$ and the solution $\Pi(\cdot)$ of equation (\ref{Pi equation}) are mutually coupled.
\end{Remark}

Summarizing what is stated above, we obtain the theorem with the incentive strategy of Decision-making Level 1 under the additional condition.
\begin{mythm}\label{thm-3.1}
	Let (A1)-(A4) hold, suppose the attenuation $\gamma$ is sufficiently large. If there exist matrix-valued functions $\eta^*_{1ij}$ and $\zeta^*_{1ij}$ such that (\ref{Pi equation}) admits solution $\Pi^*(\cdot)$, satisfying equation (\ref{equality 1}), then there exists the incentive strategy set of Decision-making Level 1 in the three-level incentive Stackelberg game under the $H_\infty$ constraint with multiple leaders and multiple followers (\ref{state})-(\ref{cost_3j}), given by
\begin{equation}\label{Gamma*1i}
\begin{aligned}
	&\Gamma^*_{1i}\big(x(t),u_{2i}(t),u_{31i}(t),u_{32i}(t),u_{33i}(t),t\big)\\
    &=\sum_{j=1}^3\xi^*_{1ij}(t)x(t)+\sum_{j=1}^3\eta^*_{1ij}(t)u_{2ij}(t)+\sum_{j=1}^3\zeta^*_{1ij}(t)u_{3ji}(t),
\end{aligned}
\end{equation}
where
\begin{equation*}
\begin{aligned}
\xi^*_{1ij}&:=-R^{-1}_i(B^\top_{1i}P+D^\top_{1i}\Lambda)+\eta^*_{1ij}R^{-1}_{1ij}(B^\top_{2ij}P+D^\top_{2ij}\Lambda)+\zeta^*_{1ij}\bar{R}^{-1}_{1ji}(B^\top_{3ji}P+D^\top_{3ji}\Lambda),\\
\Lambda&:=(I+PD_cR^{-1}_cD^\top_c)^{-1}P(C-D_cR^{-1}_cB^\top_cP).
\end{aligned}
\end{equation*}
\end{mythm}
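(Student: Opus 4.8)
The plan is to assemble the chain of reductions already established in Propositions \ref{prop-3.1}--\ref{prop-3.3} and Remark \ref{remark-3.4}, and then to verify that the announced feedback law (\ref{Gamma*1i}) forces the bottom-two-level responders onto the top leader's team-optimal strategy. First I would recall that, by Propositions \ref{prop-3.1} and \ref{prop-3.2}, the team-optimal pair $(u^*_c,v^*)$ under the $H_\infty$ constraint is given explicitly by (\ref{team-optimal}) together with $v^*(t)=\gamma^{-2}E^\top P\bar{x}(t)$, so the target strategies $u^*_{ci}$ in (\ref{u*ci}) are fixed data of the problem. The entire point is to choose the incentive parameters so that, once Decision-making Level 1 commits to (\ref{Gamma 1i}), the resulting lower-level Nash play reproduces exactly these $u^*_{ci}$.

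Next I would substitute the announced feedback (\ref{Gamma 1i}) and the designed closed-loop disturbance (\ref{v}) into the state dynamics (\ref{state}) and into each $J^2_i$ in (\ref{cost_2i}); this is the routine completion-of-the-square bookkeeping that produces the reduced two-player Nash game (\ref{state equation2})--(\ref{cost 2}), with the cross term $S_{2i}$ and the aggregated weight $R_{ci}$ of (\ref{notation-1}). On this reduced game I would invoke Proposition \ref{prop-3.3} (whose convexity hypothesis is supplied by Lemma \ref{lemma3.1} together with (A3)--(A4)) to obtain the open-loop equilibrium (\ref{Nash equilibrium2}), and then its decoupled closed-loop form (\ref{Nash equilibrium2 sfb}) through the solution $\Pi$ of the coupled Riccati-type equation (\ref{Pi equation}). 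Here I would lean on the hypothesis that $\eta^*_{1ij},\zeta^*_{1ij}$ exist with $\Pi^*(\cdot)$ solving (\ref{Pi equation}) and with $I-\Pi\mathbb{D}_2$ boundedly invertible, which is exactly what makes the closed-loop representation available.

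The crux is then to match the lower-level equilibrium with the team-optimal target, that is, to impose $u^+_{ci}=u^*_{ci}$ as in (\ref{equal 1}). I would show that (\ref{equal 1}), read along the respective state trajectories, is equivalent to the algebraic identity (\ref{equality 1}): the equilibrium feedback gain from (\ref{Nash equilibrium2 sfb}) acting on $\tilde{x}$ must equal the team-optimal gain from (\ref{u*ci}) acting on $\bar{x}$. To close the argument I would use Remark \ref{remark-3.4}, which shows by a subtraction-and-uniqueness argument (the difference $\tilde{x}-\bar{x}$ solves a homogeneous linear SDE with zero initial value, hence vanishes identically) that once the gains coincide one has $\tilde{x}(t)=\bar{x}(t)$, $\mathbb{P}$-a.s. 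The common trajectory therefore factors out, so (\ref{equal 1}) collapses to the gain-level condition (\ref{equality 1}), which holds by assumption; along the way the designed disturbance (\ref{v}) evaluated at $\bar{x}$ is seen to agree with the genuine worst-case $v^*$, so no augmented-space LQ reformulation is needed. Setting $\xi^*_{1ij}$ by the stated formula then yields the incentive law (\ref{Gamma*1i}), and feeding the team-optimal responses back through it recovers $u^*_{1i}$ as in (\ref{u*_1i}), so Decision-making Level 1's team-optimal strategy is indeed realized.

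The main obstacle is not this verification, which is essentially an assembly of the preceding propositions, but rather the coupled nonlinearity concealed in the hypothesis: the parameters $\eta^*_{1ij},\zeta^*_{1ij}$ enter the reduced coefficients $\bar{A},\bar{C},\bar{B}_{ci},\bar{D}_{ci},S_{2i},R_{ci},\bar{Q}_{2i}$, hence enter the Riccati equation (\ref{Pi equation}), whose solution $\Pi^*$ must in turn satisfy the twelve scalarized matrix relations packed into (\ref{equality 1}). Establishing solvability of this mutually coupled triple $(\eta^*,\zeta^*,\Pi^*)$ is the genuinely hard analytic question; the present theorem deliberately relegates it to the hypothesis, so that the proof need only confirm that existence of such a consistent triple is \emph{sufficient} to construct the incentive strategy set.
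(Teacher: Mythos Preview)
Your proposal is correct and follows essentially the same approach as the paper, which simply states that the proof is obvious from the preceding development (Propositions \ref{prop-3.1}--\ref{prop-3.3}, Lemma \ref{lemma3.1}, and Remark \ref{remark-3.4}) and omits the details. You have faithfully spelled out the assembly of those ingredients---reducing to the two-player Nash game, invoking the decoupled closed-loop form via $\Pi^*$, matching gains through (\ref{equality 1}), and using the $\tilde{x}=\bar{x}$ identification---exactly as the paper intends.
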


\begin{proof}
The proof is obvious from what is stated prior to Theorem \ref{thm-3.1}, we omit it here.
\end{proof}

\begin{Remark}
We notice that the following equation holds, which will be used next:
\begin{equation}\label{u*1i equality}
\begin{aligned}
	u^*_{1i}(t)&=\Gamma^*_{1i}(\bar{x}(t),u^*_{2i}(t),u^*_{31i}(t),u^*_{32i}(t),u^*_{33i}(t),t) \\
	&=\sum_{j=1}^3\xi^*_{1ij}(t)\bar{x}(t)+\sum_{j=1}^3\eta^*_{1ij}(t)u^*_{2ij}(t)+\sum_{j=1}^3\zeta^*_{1ij}(t)u^*_{3ji}(t).
\end{aligned}
\end{equation}
$\xi^*_{1ij}$, $\eta^*_{1ij}$ and $\zeta^*_{1ij}$ depend on the initial state value $x_0$, because the equation (\ref{equality 1}) includes the team-optimal trajectory $\bar{x}(\cdot)$, which depends on $x_0$.
\end{Remark}

\subsection{Nash equilibrium of Executive Level 3}

Since a sufficient condition for the incentive strategy for Decision-making Level 1 has been obtained by Theorem \ref{thm-3.1}, we will derive one for Managerial Level 2 next. Decision-making Level 1 announces the incentive strategy (\ref{Gamma*1i}) and Managerial Level 2 announces the incentive strategy ahead of time, three people in Executive Level 3 determine their strategies to achieve a Nash equilibrium by
responding to the announced strategy of Decision-making Level 1 and Managerial Level 2.

We consider the following announced strategy of the ith of Managerial Level 2:
\begin{equation}\label{Gamma 2ij}
\begin{aligned}
	u_{2ij}(t)&=\Gamma_{2ij}\big(x(t),u_{3ji}(t),t\big)\\
	&=-R^{-1}_{1ij}(B^\top_{2ij}P+D^\top_{2ij}\Lambda)x+\rho_{2ij}\big[u_{3ji}+\bar{R}^{-1}_{1ji}(B^\top_{3ji}P+D^\top_{3ji}\Lambda)x\big]\\
	&=\big[-R^{-1}_{1ij}(B^\top_{2ij}P+D^\top_{2ij}\Lambda)+\rho_{2ij}\bar{R}^{-1}_{1ji}(B^\top_{3ji}P+D^\top_{3ji}\Lambda)\big]x+\rho_{2ij}u_{3ji}\\
	&=\theta_{2ij}(t)x(t)+\rho_{2ij}(t)u_{3ji}(t),\\
\end{aligned}
\end{equation}
where
\[\theta_{2ij}:=-R^{-1}_{1ij}(B^\top_{2ij}P+D^\top_{2ij}\Lambda)+\rho_{2ij}\bar{R}^{-1}_{1ji}(B^\top_{3ji}P+D^\top_{3ji}\Lambda),\]
and $\theta_{2ij}(t)\in\mathbb{R}^{m_{2ij}\times n}$, $\rho_{2ij}(t)\in\mathbb{R}^{m_{2ij}\times m_{3ji}}$ are (unknown) strategy parameter matrices whose components are continuous functions of $t$ on the interval $[0,T]$ respectively. Notice that
\begin{equation}\label{u*_2ij}
		u^*_{2ij}(t)=\Gamma_{1i}(\bar{x}(t),u^*_{3ji}(t),t)=\theta_{2ij}(t)\bar{x}(t)+\rho_{2ij}(t)u^*_{3ji}(t).
\end{equation}

Substituting (\ref{Gamma*1i}), (\ref{Gamma 2ij}) and (\ref{v}) into state equation (\ref{state}) and cost functional for the jth member of Executive Level 3 (\ref{cost_3j}), we will get
\begin{equation}
\left\{\begin{aligned}
	dx&=\bigg[\hat{A}x+\sum_{j=1}^3\sum_{i=1}^2\hat{B}_{3ji}u_{3ji}\bigg]dt+\bigg[\hat{C}x+\sum_{j=1}^3\sum_{i=1}^2\hat{D}_{3ji}u_{3ji}\bigg]dW\\
	&=\bigg[\hat{A}x+\sum_{j=1}^3\hat{B}_{3j}u_{3j}\bigg]dt+\bigg[\hat{C}x+\sum_{j=1}^3\hat{D}_{3j}u_{3j}\bigg]dW,\\
	x(0)&=x_0,
\end{aligned}\right.
\end{equation}
\begin{equation}
\begin{aligned}
	\bar{J}^3_j(u_{31},u_{32},u_{33};x_0)&=\mathbb{E}\bigg\{ \int_{0}^{T}\big[\langle  \hat{Q}_{3j}(t)x(t),x(t)\rangle  +\,2 \left\langle x(t),S_{3j}(t)u_{3j}(t)\right\rangle\\
    &\qquad\qquad +\left\langle R_{3j}(t)u_{3j}(t),u_{3j}(t)\right\rangle\big]dt +\left\langle G_{3j}x(T),x(T)\right\rangle \bigg\} ,
\end{aligned}
\end{equation}
where
\begin{equation*}
\begin{aligned}
	\hat{A}&:=A+\gamma^{-2}EE^\top P+\sum_{i=1}^2\sum_{j=1}^3B_{1i}(\xi^*_{1ij}+\eta^*_{1ij}\theta_{2ij})+\sum_{i=1}^2\sum_{j=1}^3B_{2ij}\theta_{2ij},\\
	\hat{C}&:=C+\sum_{i=1}^2\sum_{j=1}^3D_{1i}(\xi^*_{1ij}+\eta^*_{1ij}\theta_{2ij})+\sum_{i=1}^2\sum_{j=1}^3D_{2ij}\theta_{2ij},\quad u_{3j}(t):=\mathbf{col}\big[u_{3j1}(t)\;u_{3j2}(t)\big],\\
	\hat{B}_{3ji}&:=B_{1i}(\eta^*_{1ij}\rho_{2ij}+\zeta^*_{1ij})+B_{2ij}\rho_{2ij}+B_{3ji},\quad \hat{D}_{3ji}:=D_{1i}(\eta^*_{1ij}\rho_{2ij}+\zeta^*_{1ij})+D_{2ij}\rho_{2ij}+D_{3ji},\\
	\hat{B}_{3j}&:=\begin{pmatrix}
		\hat{B}_{3j1}&\hat{B}_{3j2}
	\end{pmatrix},\quad
	\hat{D}_{3j}:=\begin{pmatrix}
		\hat{D}_{3j1}&\hat{D}_{3j2}
	\end{pmatrix},\quad
	\hat{Q}_{3j}:=Q_{3j}+\sum_{i=1}^{2}\theta^\top_{2ij}R_{3ij}\theta_{2ij},\\
	S_{3j}&:=\begin{pmatrix}
		\theta^\top_{21j}R_{31j}\rho_{21j}&\theta^\top_{22j}R_{32j}\rho_{22j}
	\end{pmatrix},\quad
	R_{3j}:=\begin{pmatrix}
		\bar{R}_{3j1}+\rho^\top_{21j}R_{31j}\rho_{21j}&0 \\
		0&\bar{R}_{3j2}+\rho^\top_{22j}R_{32j}\rho_{22j}
	\end{pmatrix}.
\end{aligned}
\end{equation*}
Next, Executive Level 3 need to find the Nash equilibrium $(\bar{u}_{31}(\cdot),\bar{u}_{32}(\cdot),\bar{u}_{33}(\cdot))\in \mathcal{U}_{31}\times \mathcal{U}_{32}\times \mathcal{U}_{33}$, such that
\begin{equation}
\begin{aligned}
	\bar{J}^3_1(\bar{u}_{31},\bar{u}_{32},\bar{u}_{33};x_0)\leq\bar{J}^3_1(u_{31},\bar{u}_{32},\bar{u}_{33};x_0),\\
	\bar{J}^3_2(\bar{u}_{31},\bar{u}_{32},\bar{u}_{33};x_0)\leq\bar{J}^3_2(\bar{u}_{31},u_{32},\bar{u}_{33};x_0),\\
	\bar{J}^3_3(\bar{u}_{31},\bar{u}_{32},\bar{u}_{33};x_0)\leq\bar{J}^3_3(\bar{u}_{31},\bar{u}_{32},u_{33};x_0),\\
\end{aligned}
\end{equation}
i.e., for $j=1,2,3$,
\[\bar{J}^3_j(\bar{u}_{31},\bar{u}_{32},\bar{u}_{33};x_0)\leq\bar{J}^3_j(\bar{u}_{-3j}(u_{3j});x_0),\]
for any $(u_{31}(\cdot),u_{32}(\cdot),u_{33}(\cdot))\in (\bar{u}_{31}(\cdot),\bar{u}_{32}(\cdot),\bar{u}_{33}(\cdot))\in \mathcal{U}_{31}\times \mathcal{U}_{32}\times \mathcal{U}_{33}$.

\begin{mylem}\label{lemma3.2}
Let (A1)-(A4) hold. For any $x_0\in\mathbb{R}^n$, $\bar{J}^3_j(\bar{u}_{-3j}(u_{3j});x_0)$ is convex (resp., strictly convex) in $u_{3j}(\cdot)\in L_{\mathbb{F}}^2(0,T;\mathbb{R}^{\sum_{i=1}^{2}m_{3ji}})$ if and only if $J_j''(h_j(\cdot))$ is positive semidefinite (resp., positive definite), where
$$
	J_j''(h_j(\cdot)):=\mathbb{E}\left\lbrace \int_0^T\left[\big\langle \hat{Q}_{3j}z''_j,z''_j\big\rangle +\,2 \left\langle z''_j,S_{3j}h_j\right\rangle+\left\langle R_{3j}h_j,h_j\right\rangle\right]  dt
    +\left\langle G_{3j}z''_j(T),z''_j(T)\right\rangle \right\rbrace,
$$
and $z''_j(\cdot)$ satisfies:
\begin{equation}\label{auxiliary J''}
\left\{\begin{aligned}
	dz''_j(t)&=[\hat{A}z''_j+\hat{B}_{3j}h_j]dt+[\hat{C}z''_j+\hat{D}_{3j}h_j]dW ,\\
	z''_j(0)&=0.
\end{aligned}\right.
\end{equation}
\end{mylem}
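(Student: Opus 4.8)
The plan is to replicate the proof of Lemma \ref{lemma3.1} almost verbatim, the only change being that the free variable is now the single player's control $u_{3j}(\cdot)$ while the remaining two followers' controls $\bar{u}_{3k}(\cdot)$ ($k\neq j$) are frozen at their equilibrium values. Because the argument is identical for each $j\in\{1,2,3\}$, I would fix an arbitrary $j$, freeze the profile $\bar{u}_{-3j}$, and take two admissible controls $u^1_{3j}(\cdot),u^2_{3j}(\cdot)\in L_{\mathbb{F}}^2(0,T;\mathbb{R}^{\sum_{i=1}^2 m_{3ji}})$ with corresponding states $x_1(\cdot),x_2(\cdot)$ of the Executive-Level-3 state equation (under the frozen controls). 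Choosing $\lambda_1\in[0,1]$ and $\lambda_2:=1-\lambda_1$, the object to analyse is the convexity gap
\[
\lambda_1\bar{J}^3_j(u^1_{3j},\bar{u}_{-3j};x_0)+\lambda_2\bar{J}^3_j(u^2_{3j},\bar{u}_{-3j};x_0)-\bar{J}^3_j(\lambda_1 u^1_{3j}+\lambda_2 u^2_{3j},\bar{u}_{-3j};x_0).
\]

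The key structural observation is that the state map $u_{3j}\mapsto x$ is affine, so the difference $z''_j:=x_1-x_2$ solves the homogeneous linear SDE (\ref{auxiliary J''}) driven by $h_j:=u^1_{3j}-u^2_{3j}$ with $z''_j(0)=0$; in particular the affine offset carrying the frozen controls $\bar{u}_{-3j}$ cancels under this subtraction, so $z''_j$ depends on $h_j$ alone. Since the running integrand $\langle\hat{Q}_{3j}x,x\rangle+2\langle x,S_{3j}u_{3j}\rangle+\langle R_{3j}u_{3j},u_{3j}\rangle$ and the terminal term are jointly quadratic in $(x,u_{3j})$, writing $x=x^0+(\text{linear in }u_{3j})$ and expanding each quadratic/bilinear block makes every constant- and linear-in-$u_{3j}$ contribution vanish from the gap, leaving precisely $\lambda_1\lambda_2$ times the pure quadratic form at $(z''_j,h_j)$, namely
\[
\lambda_1\lambda_2\,\mathbb{E}\Big\{\int_0^T\big[\langle\hat{Q}_{3j}z''_j,z''_j\rangle+2\langle z''_j,S_{3j}h_j\rangle+\langle R_{3j}h_j,h_j\rangle\big]dt+\langle G_{3j}z''_j(T),z''_j(T)\rangle\Big\}=\lambda_1\lambda_2 J''_j(h_j).
\]
As the map $(u^1_{3j},u^2_{3j})\mapsto h_j$ is surjective onto $L_{\mathbb{F}}^2(0,T;\mathbb{R}^{\sum_{i=1}^2 m_{3ji}})$, this identity shows that the gap is nonnegative for all $\lambda_1,u^1_{3j},u^2_{3j}$ if and only if $J''_j(h_j)\geq 0$ for every $h_j$ (convexity $\Leftrightarrow$ positive semidefiniteness), and strictly positive whenever $h_j\neq 0$ if and only if $J''_j>0$ (strict convexity $\Leftrightarrow$ positive definiteness), which is the assertion.

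The one point requiring genuine care, and the only subtlety relative to a single-player LQ problem, is verifying that the cross coefficient $S_{3j}$ multiplies only the free control $u_{3j}$ and not the frozen profile, so that the bilinear state--control term collapses cleanly to $2\langle z''_j,S_{3j}h_j\rangle$ with no residual coupling to $\bar{u}_{-3j}$ surviving the convexity combination. Everything beyond this is the same routine quadratic-expansion bookkeeping already performed in Lemma \ref{lemma3.1}, so in the write-up I would carry out the $i=1$-style computation for a generic $j$, record the displayed identity, and omit the repeated algebra.
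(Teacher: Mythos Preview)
Your proposal is correct and follows exactly the approach the paper intends: the paper's own proof of Lemma~\ref{lemma3.2} reads in full ``Similar to the proof of Lemma~\ref{lemma3.1}, we omit it,'' and what you have written is precisely the Lemma~\ref{lemma3.1} computation transcribed to the Executive-Level-3 setting with $u_{3j}$ free and $\bar{u}_{-3j}$ frozen. Your observation that the cross term $2\langle x,S_{3j}u_{3j}\rangle$ in $\bar{J}^3_j$ involves only the free control $u_{3j}$ (the frozen controls $\bar{u}_{3k}$, $k\neq j$, enter only through the state) is exactly the structural point that makes the argument go through cleanly, and it holds by inspection of the form of $\bar{J}^3_j$.
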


\begin{proof}
Similar to the proof of Lemma \ref{lemma3.1}, we omit it.
\end{proof}

\begin{Remark}
When the coefficient matrix $R_{3j}(\cdot)$ is sufficiently large, the convexity of \\$\bar{J}^3_j(\bar{u}_{-3j}(u_{3j});x_0)$ can usually be ensured.
\end{Remark}

We introduce the following assumption:

{\bf (A5)} The map $h_j\mapsto J_j''(h_j)$ is uniformly positive definite.

\begin{mypro}
Let (A1)-(A5) hold. Suppose the attenuation index $\gamma$ is sufficiently large. Then, $(\bar{u}_{31}(\cdot),\bar{u}_{32}(\cdot),\bar{u}_{33}(\cdot))$ is an open-loop Nash equilibrium if and only if stationary condition is satisfied:
\begin{equation*}
	R_{3j}(t)\bar{u}_{3j}(t)+S^\top_{3j}(t)\hat{x}(t)+\hat{B}^\top_{3j}(t)\hat{y}_j(t)+\hat{D}^\top_{3j}(t)\hat{z}_j(t)=0,\quad t\in[0,T],\quad\mathbb{P}\mbox{-}a.s.,
\end{equation*}
where $(\hat{x}(\cdot),\hat{y}_j(\cdot),\hat{z}_j(\cdot))$ satisfies the following FBSDE:
\begin{equation}\label{FBSDEj}
\left\{\begin{aligned}
	d\hat{x}&=\bigg[\hat{A}\hat{x}+\sum_{j=1}^3\hat{B}_{3j}\bar{u}_{3j}\bigg]dt+\bigg[\hat{C}\hat{x}+\sum_{j=1}^3\hat{D}_{3j}\bar{u}_{3j}\bigg]dW ,\\
	d\hat{y}_j&=-\bigg[\hat{A}^\top\hat{y}_j+\hat{C}^\top\hat{z}_j+\hat{Q}_{3j}\hat{x}+S_{3j}\bar{u}_{3j}\bigg]+\hat{z}_jdW,\\
	\hat{x}(0)&=x_0,\quad\hat{y}_j(T)=G_{3j}\hat{x}(T).
\end{aligned}\right.
\end{equation}
Moreover, the Nash equilibrium becomes
\begin{equation}\label{Nash equilibrium3}
	\bar{u}_{3j}(t)=-R^{-1}_{3j}(t)\big[S^\top_{3j}(t)\hat{x}(t)+\hat{B}^\top_{3j}(t)\hat{y}_{j}(t)+\hat{D}^\top_{3j}(t)\hat{z}_{j}(t)\big],\quad t\in[0,T],\quad\mathbb{P}\mbox{-}a.s..
\end{equation}
\end{mypro}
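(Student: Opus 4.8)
The plan is to treat the Nash equilibrium of Executive Level 3 as three separate stochastic LQ optimal control problems, one per player, and to characterize each minimizer by a first-order (Gateaux) condition that I then rewrite in Hamiltonian form. Fixing an index $j$ and the equilibrium choices $\bar{u}_{3k}$ ($k\neq j$) of the other two players, player $j$ must minimize $u_{3j}\mapsto\bar{J}^3_j(\bar{u}_{-3j}(u_{3j});x_0)$ over $u_{3j}(\cdot)\in L_{\mathbb{F}}^2(0,T;\mathbb{R}^{\sum_{i=1}^2 m_{3ji}})$. By Lemma \ref{lemma3.2} together with (A5), this functional is uniformly convex, so a control is its unique minimizer if and only if the Gateaux derivative of $\bar{J}^3_j$ there vanishes in every admissible direction. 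This reduces the search for the Nash equilibrium to a system of three simultaneous stationarity conditions, and it is precisely the convexity that makes the resulting condition both necessary and sufficient.

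First I would perturb $\bar{u}_{3j}$ to $\bar{u}_{3j}+\epsilon h_j$ and record that the associated state perturbation is $\epsilon z''_j$, where $z''_j$ solves the variational system (\ref{auxiliary J''}) driven by $h_j$. Differentiating, $\tfrac12\tfrac{d}{d\epsilon}\big|_{\epsilon=0}\bar{J}^3_j$ collects the running terms $\langle\hat{Q}_{3j}\hat{x},z''_j\rangle+\langle S_{3j}\bar{u}_{3j},z''_j\rangle+\langle S^\top_{3j}\hat{x},h_j\rangle+\langle R_{3j}\bar{u}_{3j},h_j\rangle$ together with the terminal contribution $\langle G_{3j}\hat{x}(T),z''_j(T)\rangle$, where $\hat{x}$ is the equilibrium state. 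The nuisance here is that this expression still depends on the state perturbation $z''_j$, which is an implicit functional of $h_j$.

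Next I would remove that dependence by duality: introduce the adjoint pair $(\hat{y}_j,\hat{z}_j)$ via the backward equation in (\ref{FBSDEj}), apply It\^o's formula to $\langle\hat{y}_j,z''_j\rangle$, and take expectations. The couplings of $z''_j$ against $\hat{A},\hat{C}$ cancel exactly against the $\hat{A}^\top\hat{y}_j,\hat{C}^\top\hat{z}_j$ terms in the adjoint drift, while the terminal condition $\hat{y}_j(T)=G_{3j}\hat{x}(T)$ accounts for the terminal cost term and the forcing $\hat{Q}_{3j}\hat{x}+S_{3j}\bar{u}_{3j}$ in the adjoint drift absorbs the two $z''_j$-running terms above. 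After this step every $z''_j$-dependent contribution cancels identically and the derivative collapses to $\mathbb{E}\int_0^T\langle R_{3j}\bar{u}_{3j}+S^\top_{3j}\hat{x}+\hat{B}^\top_{3j}\hat{y}_j+\hat{D}^\top_{3j}\hat{z}_j,\,h_j\rangle\,dt$. Since $h_j$ ranges over all of $L_{\mathbb{F}}^2$, the vanishing of this linear functional is equivalent to the stated stationary condition holding $\mathrm{d}t\otimes\mathbb{P}$-a.e., and inverting $R_{3j}$, which is legitimate because $R_{3j}\gg0$ under (A1)--(A2) by construction of the block, yields the feedback representation (\ref{Nash equilibrium3}).

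The step I expect to be the main obstacle is not the variational algebra but the well-posedness that underlies it: one must ensure that the coupled forward-backward system (\ref{FBSDEj}), and the analogous systems for the remaining players, admits a unique adapted solution, so that the adjoint processes $\hat{y}_j,\hat{z}_j$ are genuinely defined and the It\^o duality identity is valid. This is exactly where the hypothesis that the attenuation index $\gamma$ is sufficiently large, together with the uniform convexity (A5) and the invertibility conditions inherited from the earlier decoupling, enter; invoking Theorem 2.2.1 of \cite{Sun-Yong20} as in Proposition \ref{prop-3.3} supplies the required solvability and closes the equivalence.
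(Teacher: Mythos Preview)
Your proposal is correct and is essentially the same approach as the paper's: the paper simply writes ``The details are omitted here'' and relies, as in Proposition~\ref{prop-3.3}, on Theorem~2.2.1 of \cite{Sun-Yong20}, whose proof is exactly the variational/adjoint duality argument you spell out. In effect you have written out the details the paper omits.
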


\begin{proof}
The details are omitted here.
\end{proof}

To obtain the state feedback representation of Nash equilibrium, plugging (\ref{Nash equilibrium3}) into  (\ref{FBSDEj}), we have the following Hamiltonian system:
\begin{equation}\label{Hamiltonian 3}
\left\{\begin{aligned}
	d\hat{x}&=\bigg[\bigg(\hat{A}-\sum_{j=1}^3\hat{B}_{3j}R^{-1}_{3j}S^\top_{3j}\bigg)\hat{x}-\sum_{j=1}^3\hat{B}_{3j}R^{-1}_{3j}\hat{B}^\top_{3j}\hat{y}_j-\sum_{j=1}^3\hat{B}_{3j}R^{-1}_{3j}\hat{D}^\top_{3j}\hat{z}_j\bigg]dt \\
	&\quad +\bigg[\big(\hat{C}-\sum_{j=1}^3\hat{D}_{3j}R^{-1}_{3j}S^\top_{3j}\big)\hat{x}-\sum_{j=1}^3\hat{D}_{3j}R^{-1}_{3j}\hat{B}^\top_{3j}\hat{y}_j-\sum_{j=1}^3\hat{D}_{3j}R^{-1}_{3j}\hat{D}^\top_{3j}\hat{z}_j\bigg]dW,\\
	d\hat{y}_1&=-\Big[(\hat{A}-\hat{B}_{31}R^{-1}_{31}S^\top_{31})^\top\hat{y}_1+(\hat{C}-\hat{D}_{31}R^{-1}_{31}S^\top_{31})^\top\hat{z}_1+(\hat{Q}_{31}-S_{31}R^{-1}_{31}S^\top_{31})\Big]dt+\hat{z}_1dW,\\
	d\hat{y}_2&=-\Big[(\hat{A}-\hat{B}_{32}R^{-1}_{32}S^\top_{32})^\top\hat{y}_2+(\hat{C}-\hat{D}_{32}R^{-1}_{32}S^\top_{32})^\top\hat{z}_2+(\hat{Q}_{32}-S_{32}R^{-1}_{32}S^\top_{32})\Big]dt+\hat{z}_2dW,\\
	d\hat{y}_3&=-\Big[(\hat{A}-\hat{B}_{33}R^{-1}_{33}S^\top_{33})^\top\hat{y}_3+(\hat{C}-\hat{D}_{33}R^{-1}_{33}S^\top_{33})^\top\hat{z}_3+(\hat{Q}_{33}-S_{33}R^{-1}_{33}S^\top_{33})\Big]dt+\hat{z}_3dW,\\
	\hat{x}(0)&=x_0,\quad\hat{y}_1(T)=G_{31}\hat{x}(T),\quad\hat{y}_2(T)=G_{32}\hat{x}(T),\quad\hat{y}_3(T)=G_{33}\hat{x}(T).
\end{aligned}\right.
\end{equation}

Set
\begin{equation*}
\begin{aligned}
	\mathbf{Y}&:=\begin{pmatrix}
		\hat{y}_1\\
		\hat{y}_2\\
		\hat{y}_3
	\end{pmatrix},\quad
	\mathbf{Z}:=\begin{pmatrix}
		\hat{z}_1\\
		\hat{z}_2\\
		\hat{z}_3
	\end{pmatrix},\quad
	\mathbf{B}_1:=\begin{pmatrix}
		-\hat{B}_{31}R^{-1}_{31}\hat{B}^\top_{31}& -\hat{B}_{32}R^{-1}_{32}\hat{B}^\top_{32} & -\hat{B}_{33}R^{-1}_{33}\hat{B}^\top_{33}
	\end{pmatrix},\\
	\mathbf{B}_2&:=\begin{pmatrix}
		-\hat{B}_{31}R^{-1}_{31}\hat{D}^\top_{31}& -\hat{B}_{32}R^{-1}_{32}\hat{D}^\top_{32} & -\hat{B}_{33}R^{-1}_{33}\hat{D}^\top_{33}
	\end{pmatrix},\\
	\mathbf{D}_1&:=\begin{pmatrix}
		-\hat{D}_{31}R^{-1}_{31}\hat{B}^\top_{31}& -\hat{D}_{32}R^{-1}_{32}\hat{B}^\top_{32} & -\hat{D}_{33}R^{-1}_{33}\hat{B}^\top_{33}
	\end{pmatrix},\\
	\mathbf{D}_2&:=\begin{pmatrix}
		-\hat{D}_{31}R^{-1}_{31}\hat{D}^\top_{31}& -\hat{D}_{32}R^{-1}_{32}\hat{D}^\top_{32} & -\hat{D}_{33}R^{-1}_{33}\hat{D}^\top_{33}
	\end{pmatrix},\\
\end{aligned}
\end{equation*}
\begin{equation*}
\begin{aligned}
	\mathbf{A}&:=\begin{pmatrix}
		\hat{A}-\hat{B}_{31}R^{-1}_{31}S^\top_{31}&0  &0\\
		0& \hat{A}-\hat{B}_{32}R^{-1}_{32}S^\top_{32} &0 \\
		0& 0 &\hat{A}-\hat{B}_{33}R^{-1}_{33}S^\top_{33}
	\end{pmatrix},\\
	\mathbf{C}&:=\begin{pmatrix}
		\hat{C}-\hat{D}_{31}R^{-1}_{31}S^\top_{31}&  0& 0\\
		0& \hat{C}-\hat{D}_{32}R^{-1}_{32}S^\top_{32} &0 \\
		0& 0 &\hat{C}-\hat{D}_{33}R^{-1}_{33}S^\top_{33}
	\end{pmatrix},\\
	\mathbf{Q}&:=\begin{pmatrix}
		\hat{Q}_{31}-S_{31}R^{-1}_{31}S^\top_{31}\\
		\hat{Q}_{32}-S_{32}R^{-1}_{32}S^\top_{32}\\
		\hat{Q}_{33}-S_{33}R^{-1}_{33}S^\top_{33}
	\end{pmatrix},\quad
	\mathbf{G}:=\begin{pmatrix}
		G_{31}\\
		G_{32}\\
		G_{33}
	\end{pmatrix}.
\end{aligned}
\end{equation*}
We can express the Hamiltonian system (\ref{Hamiltonian 3}) more compactly as follows:
\begin{equation}\label{Hamiltonian 3'}
\left\{\begin{aligned}
    d\hat{x}&=\bigg[\bigg(\hat{A}-\sum_{j=1}^3\hat{B}_{3j}R^{-1}_{3j}S^\top_{3j}\bigg)\hat{x}+\mathbf{B}_1\mathbf{Y}+\mathbf{B}_2\mathbf{Z}\bigg]dt\\
    &\quad +\bigg[\bigg(\hat{C}-\sum_{j=1}^3\hat{D}_{3j}R^{-1}_{3j}S^\top_{3j}\bigg)\hat{x}+\mathbf{D}_1\mathbf{Y}+\mathbf{D}_2\mathbf{Z}\bigg]dW,\\
	d\mathbf{Y}&=-\big[\mathbf{A}^\top\mathbf{Y}+\mathbf{C}^\top\mathbf{Z}+\mathbf{Q}\hat{x}\big]dt+\mathbf{Z}dW,\\
	\hat{x}(0)&=x_0,\quad\mathbf{Y}(T)=\mathbf{G}\hat{x}(T).
\end{aligned}\right.
\end{equation}

\begin{mypro}
Suppose that the following equation
\begin{equation}\label{Phi}
\left\{\begin{aligned}
	&\dot{\Phi}+\Phi\bigg(\hat{A}-\sum_{j=1}^3\hat{B}_{3j}R^{-1}_{3j}S^\top_{3j}\bigg)+\mathbf{A}^\top\Phi+\Phi\mathbf{B}_1\Phi+\mathbf{Q}+(\mathbf{C}^\top+\Phi\mathbf{B}_2)(I-\Phi\mathbf{D}_2)^{-1}\\
	&\quad\times\bigg(\Phi\hat{C}-\sum_{j=1}^3\Phi\hat{D}_{3j}R^{-1}_{3j}S^\top_{3j}+\Phi\mathbf{D}_1\Phi\bigg)=0,\\
	&\Phi(T)=\mathbf{G},
\end{aligned}\right.
\end{equation}
admits a solution $\Phi(\cdot)$ such that $I-\Phi\mathbf{D}_2$ has bounded inverse, then the Nash equilibrium (\ref{Nash equilibrium3}) has the following representation:
\begin{equation*}
\begin{aligned}
	\bar{u}_{31}(t)&=-R^{-1}_{31}\bigg[S^\top_{31}+\begin{pmatrix}
		\hat{B}^\top_{31}& 0 &0
	\end{pmatrix}\Phi+\begin{pmatrix}
	\hat{D}^\top_{31}& 0 &0
	\end{pmatrix}(I-\Phi\mathbf{D}_2)^{-1}\\
    &\qquad\qquad \times\bigg(\Phi\hat{C}-\sum_{j=1}^3\Phi\hat{D}_{3j}R^{-1}_{3j}S^\top_{3j}+\Phi\mathbf{D}_1\Phi\bigg)\bigg]\hat{x}(t),\\
	\bar{u}_{32}(t)&=-R^{-1}_{32}\bigg[S^\top_{32}+\begin{pmatrix}
		0& \hat{B}^\top_{32} &0
	\end{pmatrix}\Phi+\begin{pmatrix}
		0& \hat{D}^\top_{32} &0
	\end{pmatrix}(I-\Phi\mathbf{D}_2)^{-1}\\
    &\qquad\qquad \times\bigg(\Phi\hat{C}-\sum_{j=1}^3\Phi\hat{D}_{3j}R^{-1}_{3j}S^\top_{3j}+\Phi\mathbf{D}_1\Phi\bigg)\bigg]\hat{x}(t),\\
\end{aligned}
\end{equation*}
\begin{equation}\label{bar u3j}
\begin{aligned}
	\bar{u}_{33}(t)&=-R^{-1}_{33}\bigg[S^\top_{33}+\begin{pmatrix}
		0& 0 &\hat{B}^\top_{33}
	\end{pmatrix}\Phi+\begin{pmatrix}
		0& 0 &\hat{D}^\top_{33}
	\end{pmatrix}(I-\Phi\mathbf{D}_2)^{-1}\\
    &\qquad\qquad \times\bigg(\Phi\hat{C}-\sum_{j=1}^3\Phi\hat{D}_{3j}R^{-1}_{3j}S^\top_{3j}+\Phi\mathbf{D}_1\Phi\bigg)\bigg]\hat{x}(t),
\end{aligned}
\end{equation}
where $\hat{x}(\cdot)$ satisfies
\begin{equation}\label{hat x}
\left\{\begin{aligned}
    d\hat{x}&=\bigg[\hat{A}-\sum_{j=1}^3\hat{B}_{3j}R^{-1}_{3j}S^\top_{3j}+\mathbf{B}_1\Phi+\mathbf{B}_2(I-\Phi\mathbf{D}_2)^{-1}\\
    &\qquad \times\bigg(\Phi\hat{C}-\sum_{j=1}^3\Phi\hat{D}_{3j}R^{-1}_{3j}S^\top_{3j}+\Phi\mathbf{D}_1\Phi\bigg)\bigg]\hat{x}dt\\
	&\quad +\bigg[\hat{C}-\sum_{j=1}^3\hat{D}_{3j}R^{-1}_{3j}S^\top_{3j}+\mathbf{D}_1\Phi+\mathbf{D}_2(I-\Phi\mathbf{D}_2)^{-1}\\
    &\qquad\quad \times\bigg(\Phi\hat{C}-\sum_{j=1}^3\Phi\hat{D}_{3j}R^{-1}_{3j}S^\top_{3j}+\Phi\mathbf{D}_1\Phi\bigg)\bigg]\hat{x}dW,\\
	\hat{x}(0)&=x_0.
\end{aligned}\right.
\end{equation}
\end{mypro}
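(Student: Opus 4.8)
The plan is to decouple the Hamiltonian system (\ref{Hamiltonian 3'}) by exactly the ansatz that worked in Proposition \ref{prop-3.2}, positing the linear relation $\mathbf{Y}(t)=\Phi(t)\hat{x}(t)$ for a deterministic matrix $\Phi$ partitioned into three row blocks $\Phi_1,\Phi_2,\Phi_3$ (so that $\hat{y}_j=\Phi_j\hat{x}$). I would apply It\^o's formula to $\Phi\hat{x}$, using the forward dynamics of $\hat{x}$ in (\ref{Hamiltonian 3'}) whose drift is $(\hat{A}-\sum_{j=1}^3\hat{B}_{3j}R^{-1}_{3j}S^\top_{3j})\hat{x}+\mathbf{B}_1\mathbf{Y}+\mathbf{B}_2\mathbf{Z}$ and whose diffusion is the analogous expression with $\hat{C},\mathbf{D}_1,\mathbf{D}_2$, and then compare the resulting expansion term by term with the backward equation $d\mathbf{Y}=-[\mathbf{A}^\top\mathbf{Y}+\mathbf{C}^\top\mathbf{Z}+\mathbf{Q}\hat{x}]dt+\mathbf{Z}dW$.

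Matching the $dW$ (martingale) parts first yields the algebraic identity $\mathbf{Z}=\Phi(\hat{C}-\sum_{j=1}^3\hat{D}_{3j}R^{-1}_{3j}S^\top_{3j})\hat{x}+\Phi\mathbf{D}_1\Phi\hat{x}+\Phi\mathbf{D}_2\mathbf{Z}$, so that under the standing hypothesis that $I-\Phi\mathbf{D}_2$ is boundedly invertible one solves $\mathbf{Z}=\Sigma\hat{x}$ with $\Sigma:=(I-\Phi\mathbf{D}_2)^{-1}(\Phi\hat{C}-\sum_{j=1}^3\Phi\hat{D}_{3j}R^{-1}_{3j}S^\top_{3j}+\Phi\mathbf{D}_1\Phi)$; this is precisely the bracketed factor recurring in (\ref{Phi}), (\ref{bar u3j}) and (\ref{hat x}). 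Substituting $\mathbf{Z}=\Sigma\hat{x}$ and $\mathbf{Y}=\Phi\hat{x}$ back into the $dt$ parts and demanding that the identity hold for every $\hat{x}$ forces $\Phi$ to satisfy exactly the Riccati-type equation (\ref{Phi}) (the cross term $\Phi\mathbf{B}_2\Sigma+\mathbf{C}^\top\Sigma$ collapses into the factor $(\mathbf{C}^\top+\Phi\mathbf{B}_2)(I-\Phi\mathbf{D}_2)^{-1}(\cdots)$), while matching the terminal data $\mathbf{Y}(T)=\mathbf{G}\hat{x}(T)$ gives $\Phi(T)=\mathbf{G}$.

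With $\mathbf{Y}=\Phi\hat{x}$ and $\mathbf{Z}=\Sigma\hat{x}$ established, I would read off the feedback forms by substitution into the stationarity representation (\ref{Nash equilibrium3}): since $\hat{y}_j,\hat{z}_j$ are the $j$-th blocks of $\mathbf{Y},\mathbf{Z}$, the products $\hat{B}^\top_{3j}\hat{y}_j$ and $\hat{D}^\top_{3j}\hat{z}_j$ become row-selector multiples of $\Phi\hat{x}$ and $\Sigma\hat{x}$ (e.g. $(\hat{B}^\top_{31}\;0\;0)\Phi\hat{x}$), producing the three displayed formulas for $\bar{u}_{31},\bar{u}_{32},\bar{u}_{33}$ in (\ref{bar u3j}); inserting the same relations into the forward $\hat{x}$-equation collapses it to the closed-loop SDE (\ref{hat x}). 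The only genuinely delicate point is the bookkeeping of the three-block/selector structure of $(\mathbf{Y},\mathbf{Z})$ together with the block-diagonal $\mathbf{A},\mathbf{C}$ and the row-block matrices $\mathbf{B}_1,\mathbf{B}_2,\mathbf{D}_1,\mathbf{D}_2$, making sure they recombine correctly, and verifying that invertibility of $I-\Phi\mathbf{D}_2$ is the sole additional assumption needed to carry out the elimination of $\mathbf{Z}$; everything else parallels the proof of Proposition \ref{prop-3.2} and introduces no new ideas, so the details may be omitted.
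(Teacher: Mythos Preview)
Your proposal is correct and follows essentially the same approach as the paper: posit the decoupling ansatz $\mathbf{Y}=\Phi\hat{x}$ with $\Phi$ partitioned into three row blocks, apply It\^o's formula, match diffusion coefficients to solve $\mathbf{Z}=(I-\Phi\mathbf{D}_2)^{-1}(\Phi\hat{C}-\sum_j\Phi\hat{D}_{3j}R^{-1}_{3j}S^\top_{3j}+\Phi\mathbf{D}_1\Phi)\hat{x}$, match drifts to obtain the Riccati equation (\ref{Phi}), and substitute back into (\ref{Nash equilibrium3}) and the forward equation. The only cosmetic difference is that the paper names the $\mathbf{Z}$-coefficient $\Psi$ rather than your $\Sigma$.
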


\begin{proof}
Let $\mathbf{Y}=\Phi\hat{x}$ with $\Phi=\begin{pmatrix}
		\Phi_1\\
		\Phi_2\\
		\Phi_3
	\end{pmatrix}$, we have
\[\mathbf{Z}=(I-\Phi\mathbf{D}_2)^{-1}\bigg(\Phi\hat{C}-\sum_{j=1}^3\Phi\hat{D}_{3j}R^{-1}_{3j}S^\top_{3j}+\Phi\mathbf{D}_1\Phi\bigg)\hat{x}=\Psi\hat{x},\]
with $\Psi=\begin{pmatrix}
		\Psi_1\\
		\Psi_2\\
		\Psi_3
	\end{pmatrix}$, and $\Phi$ satisfies (\ref{Phi}). Then the Nash equilibrium (\ref{Nash equilibrium3}) has the above representation (\ref{bar u3j}).
\end{proof}

For (\ref{Gamma 2ij}) to become the incentive strategy of Managerial Level 2, (\ref{bar u3j}) must be matched with the corresponding team-optimal strategies of Executive Level 3 from some conditions. We give the following assumption:
\begin{equation}\label{equal 2}
	\bar{u}_{3j}(t)=u^*_{3j}(t),\qquad t\in[0,T],
\end{equation}
i.e.,
\begin{equation}\label{equal 2'}
	\bar{u}_{3ji}(t)=u^*_{3ji}(t),\qquad t\in[0,T],\qquad j=1,2,3,\;i=1,2.
\end{equation}

\begin{Remark}\label{remark-3.7}
When the relation (\ref{equal 2}) holds, we will get that $\hat{x}(t)=\bar{x}(t)$. In fact, substituting (\ref{u*_2ij}) into (\ref{u*1i equality}), we have
\begin{equation}\label{u*1i}
	u^*_{1i}=\sum_{j=1}^{3}\big[(\xi^*_{1ij}+\eta^*_{1ij}\theta_{2ij})\bar{x}+(\zeta^*_{1ij}+\eta^*_{1ij}\rho_{2ij})u^*_{3ji}\big].
\end{equation}
By (\ref{u*1i}), (\ref{u*_2ij}) and (\ref{bar x'}), we achieve
\begin{equation}
\left\{\begin{aligned}
	d\bar{x}&=\bigg[\bigg(A+\gamma^{-2}EE^\top P+\sum_{i=1}^2\sum_{j=1}^3B_{1i}(\xi^*_{1ij}+\eta^*_{1ij}\theta_{2ij})+\sum_{i=1}^2\sum_{j=1}^3B_{2ij}\theta_{2ij}\bigg)\bar{x}\\
	&\qquad +\sum_{j=1}^3\sum_{i=1}^2\bigg(B_{3ji}+B_{2ij}\rho_{2ij}+B_{1i}(\eta^*_{1ij}\rho_{2ij}+\zeta^*_{1ij})\bigg)u^*_{3ji}\bigg]dt\\
	&\quad +\bigg[\bigg(C+\sum_{i=1}^2\sum_{j=1}^3D_{1i}(\xi^*_{1ij}+\eta^*_{1ij}\theta_{2ij})+\sum_{i=1}^2\sum_{j=1}^3D_{2ij}\theta_{2ij}\bigg)\bar{x}\\
	&\qquad\quad +\sum_{j=1}^3\sum_{i=1}^2\bigg(D_{3ji}+D_{2ij}\rho_{2ij}+D_{1i}(\eta^*_{1ij}\rho_{2ij}+\zeta^*_{1ij})\bigg)u^*_{3ji}\bigg]dW,\\
	\bar{x}(0)&=x_0.
\end{aligned}\right.
\end{equation}
Moreover, $\hat{x}(\cdot)$ satisfies
\begin{equation}
\left\{\begin{aligned}
	d\hat{x}&=\bigg[\bigg(A+\gamma^{-2}EE^\top P+\sum_{i=1}^2\sum_{j=1}^3B_{1i}(\xi^*_{1ij}+\eta^*_{1ij}\theta_{2ij})+\sum_{i=1}^2\sum_{j=1}^3B_{2ij}\theta_{2ij}\bigg)\hat{x}\\
	&\qquad +\sum_{j=1}^3\sum_{i=1}^2\bigg(B_{1i}(\eta^*_{1ij}\rho_{2ij}+\zeta^*_{1ij})+B_{2ij}\rho_{2ij}+B_{3ji}\bigg)\bar{u}_{3ji}\bigg]dt\\
	&\quad +\bigg[\bigg(C+\sum_{i=1}^2\sum_{j=1}^3D_{1i}(\xi^*_{1ij}+\eta^*_{1ij}\theta_{2ij})+\sum_{i=1}^2\sum_{j=1}^3D_{2ij}\theta_{2ij}\bigg)\hat{x}\\
	&\qquad\quad +\sum_{j=1}^3\sum_{i=1}^2\bigg(D_{1i}(\eta^*_{1ij}\rho_{2ij}+\zeta^*_{1ij})+D_{2ij}\rho_{2ij}+D_{3ji}\bigg)\bar{u}_{3ji}\bigg]dW,\\
	\hat{x}(0)&=x_0.
\end{aligned}\right.
\end{equation}
Subtracting the above two equations under the constraint (\ref{equal 2'}), we get
\begin{equation}
\left\{\begin{aligned}
	d(\bar{x}-\hat{x})&=\bigg[A+\gamma^{-2}EE^\top P+\sum_{i=1}^2\sum_{j=1}^3B_{1i}(\xi^*_{1ij}+\eta^*_{1ij}\theta_{2ij})+\sum_{i=1}^2\sum_{j=1}^3B_{2ij}\theta_{2ij}\bigg](\bar{x}-\hat{x})dt\\
	&\quad +\bigg[C+\sum_{i=1}^2\sum_{j=1}^3D_{1i}(\xi^*_{1ij}+\eta^*_{1ij}\theta_{2ij})+\sum_{i=1}^2\sum_{j=1}^3D_{2ij}\theta_{2ij})\bigg](\bar{x}-\hat{x})dW,\\
	(\bar{x}-\hat{x})(0)&=0.
\end{aligned}\right.
\end{equation}
Then, $\bar{x}-\hat{x}\equiv0$ is the solution of this equation, we have
\[\hat{x}(t)=\bar{x}(t),\qquad t\in[0,T],\qquad\mathbb{P}\mbox{-}a.s..\]
When the relation (\ref{equal 2'}) holds, it can be expressed briefly as follows,
\begin{equation}\label{equality 2}
	\bigg[-(\bar{R}_{3ji}+\rho^\top_{2ij}R_{3ij}\rho_{2ij})^{-1}(\rho^\top_{2ij}R_{3ij}\theta_{2ij}+\hat{B}^\top_{3ji}\Phi_j+\hat{D}^\top_{3ji}\Psi_j)+\bar{R}^{-1}_{1ji}(B^\top_{3ji}P+D^\top_{3ji}\Lambda)\bigg]\bar{x}(t)=0.
\end{equation}
\end{Remark}

From the above analysis, we obtain the following theorem.
\begin{mythm}\label{thm-3.2}
Let (A1)-(A5) hold, and suppose the attenuation $\gamma$ is sufficiently large. If there exist matrix-valued functions $\rho^*_{2ij}$ and $\theta^*_{2ij}$ such that (\ref{Phi}) admits solution $\Phi^*(\cdot)$, satisfying equation (\ref{equality 2}), then there exists the incentive strategy set of Managerial Level 2 in the three-level incentive Stackelberg game under the $H_\infty$ constraint with multiple leaders and multiple followers (\ref{state})-(\ref{cost_3j}), given by
\begin{equation}\label{Gamma*2ij}
\begin{aligned}
	\Gamma^*_{2ij}(x(t),u_{3ji}(t),t)=\theta^*_{2ij}(t)x(t)+\rho^*_{2ij}(t)u_{3ji}(t),\\
\end{aligned}
\end{equation}
where
\begin{equation*}
\begin{aligned}
	\theta^*_{2ij}&:=-R^{-1}_{1ij}(B^\top_{2ij}P+D^\top_{2ij}\Lambda)+\rho^*_{2ij}\bar{R}^{-1}_{1ji}(B^\top_{3ji}P+D^\top_{3ji}\Lambda),\\
	\Psi^*&:=(I-\Phi^*\mathbf{D}_2)^{-1}\bigg(\Phi^*\hat{C}-\sum_{j=1}^{3}\Phi^*\hat{D}_{3j}R^{-1}_{3j}S^\top_{3j}+\Phi^*\mathbf{D}_1\Phi^*\bigg).
\end{aligned}
\end{equation*}
\end{mythm}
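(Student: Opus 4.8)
The plan is to prove Theorem \ref{thm-3.2} by a verification argument mirroring the treatment of Decision-making Level 1 in Theorem \ref{thm-3.1}, now applied to the bottom two levels. First I would freeze the incentive $\Gamma^*_{1i}$ of Decision-making Level 1 (granted by Theorem \ref{thm-3.1}), the candidate incentive $\Gamma_{2ij}(x,u_{3ji},t)=\theta_{2ij}x+\rho_{2ij}u_{3ji}$ of Managerial Level 2, and the closed-loop disturbance (\ref{v}); substituting all three into the state equation (\ref{state}) and into each cost functional (\ref{cost_3j}) reduces the problem faced by Executive Level 3 to the three-player LQ Nash game with data $\hat{A},\hat{C},\hat{B}_{3j},\hat{D}_{3j},\hat{Q}_{3j},S_{3j},R_{3j}$ displayed before Lemma \ref{lemma3.2}. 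Under (A1)--(A5) and $\gamma$ sufficiently large, Lemma \ref{lemma3.2} with (A5) gives strict convexity of each $\bar{J}^3_j(\bar{u}_{-3j}(u_{3j});x_0)$ in its own variable, so any open-loop Nash equilibrium is characterized by the stationary condition and the FBSDE (\ref{FBSDEj}); once (\ref{Phi}) admits a solution $\Phi^*$ with $I-\Phi^*\mathbf{D}_2$ invertible, the associated Hamiltonian system (\ref{Hamiltonian 3'}) is uniquely solvable, the equilibrium is unique, and it carries the state-feedback representation (\ref{bar u3j}) driven by $\hat{x}(\cdot)$ from (\ref{hat x}).

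Next I would verify that, under the hypothesis, the team-optimal strategy $u^*_{3j}$ of (\ref{team-optimal}) is itself this Nash equilibrium. To avoid the circularity latent in Remark \ref{remark-3.7} --- where $\hat{x}=\bar{x}$ is deduced from the very matching we wish to establish --- I would proceed by a candidate-plus-uniqueness route. Take $u^*_{3ji}(t)=-\bar{R}^{-1}_{1ji}(B^\top_{3ji}P+D^\top_{3ji}\Lambda)\bar{x}(t)$ as a trial input for Executive Level 3; feeding it into the reduced Level-3 dynamics and recovering the Level-1 and Level-2 responses through (\ref{u*1i equality}) and (\ref{u*_2ij}), the computation of Remark \ref{remark-3.7} shows that the resulting closed-loop coefficients coincide with those of the team-optimal equation (\ref{bar x'}). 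Since both start from $x_0$, uniqueness for the linear SDE forces the generated trajectory to be exactly $\bar{x}$, i.e. $\hat{x}=\bar{x}$ on $[0,T]$, $\mathbb{P}$-a.s. Evaluating the feedback gain in (\ref{bar u3j}) along this common trajectory and comparing with the team-optimal gain, equation (\ref{equality 2}) --- granted for $(\rho^*_{2ij},\theta^*_{2ij},\Phi^*)$ by hypothesis --- states precisely that the trial input satisfies the Nash feedback law. Hence the candidate meets the stationary condition, and by the uniqueness already established it must be the Nash equilibrium, giving $\bar{u}_{3j}(t)=u^*_{3j}(t)$ for a.e. $t$, which is the matching (\ref{equal 2'}).

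Finally, with Executive Level 3 induced to play $u^*_{3j}$ and, by Theorem \ref{thm-3.1}, Managerial Level 2 induced to play $u^*_{ci}$, the whole three-level system attains the top leader's team-optimal solution; therefore $\Gamma^*_{2ij}(x,u_{3ji},t)=\theta^*_{2ij}x+\rho^*_{2ij}u_{3ji}$ of the form (\ref{Gamma*2ij}), with $\theta^*_{2ij}$ and $\Psi^*$ as stated, is a genuine incentive strategy of Managerial Level 2. I expect the main obstacle to be the deeply coupled, self-referential structure of the data: $\theta^*_{2ij}$ is tied to $\rho^*_{2ij}$, the Riccati-type equation (\ref{Phi}) for $\Phi^*$ depends on the hatted coefficients and hence on $(\theta_{2ij},\rho_{2ij})$ and on the Level-1 gains, while (\ref{equality 2}) couples $\rho_{2ij},\theta_{2ij}$ with $\Phi^*_j,\Psi^*_j$. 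The genuine analytic difficulty --- solvability of this coupled nonlinear system --- is absorbed into the hypothesis, so the substantive content of the proof is the verification of the second step: establishing $\hat{x}=\bar{x}$ from the trial control and legitimately reducing the feedback-law identity (\ref{equal 2'}) to the finite-dimensional condition (\ref{equality 2}) without presupposing the matching.
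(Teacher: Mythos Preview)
Your proposal is correct and follows the same line as the paper, which simply states that the proof ``is obvious from what is stated prior to Theorem \ref{thm-3.2}'' and omits details. Your candidate-plus-uniqueness argument makes explicit the sufficiency direction that the paper leaves implicit: Remark \ref{remark-3.7} literally derives (\ref{equality 2}) \emph{assuming} the matching (\ref{equal 2}), whereas the theorem requires the converse, and your route---feed $u^*_{3j}$ into the reduced Level-3 dynamics, identify the resulting trajectory with $\bar{x}$ via the linear-SDE uniqueness computation of Remark \ref{remark-3.7}, then invoke (\ref{equality 2}) to see that $u^*_{3j}$ realizes the Nash feedback law along $\bar{x}$, and conclude by uniqueness of the closed-loop equation (\ref{hat x})---closes that logical loop cleanly.
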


\begin{proof}
The proof is obvious from what is stated prior to Theorem \ref{thm-3.2}, we omit the details.
\end{proof}

\begin{Remark}
Notice that the following equation holds:
\begin{equation}\label{u*2ij equality}
	u^*_{2ij}(t)=\Gamma^*_{2ij}(\bar{x}(t),u^*_{3ji}(t),t)=\theta^*_{2ij}(t)\bar{x}(t)+\rho^*_{2ij}(t)u^*_{3ji}(t).
\end{equation}
$\rho^*_{2ij}$ and $\theta^*_{2ij}$ depend on the initial state value $x_0$, because the equation (\ref{equality 2}) includes the team-optimal trajectory $\bar{x}(\cdot)$, which depends on $x_0$.
\end{Remark}

\section{A numerical example}

In this section, we give a numerical example with three-level-leader-follower, to demonstrate the effectiveness of our proposed incentive Stackelberg strategy set. Firstly, from the previous analysis in section 3, we give an algorithm procedure to illustrate the process of solving incentive Stackelberg equilibrium strategies.

\begin{algorithm}[H]
\label{Algorithm 1}
\caption{Algorithm procedure of the incentive Stackelberg equilibrium strategies.}
\LinesNumbered
\KwIn{Choose coefficients of the stochastic system (\ref{state})-(\ref{cost_3j}).}
Solve $P$ from equation (\ref{P}), and calculate $\Lambda$.
	
Obtain the optimal state trajectory $\bar{x}$ by (\ref{bar x}).
	
Obtain team-optimal strategy $u^*_c$ and the worst-case disturbance $v^*$ by (\ref{cor}).
\end{algorithm}
\begin{algorithm*}
\LinesNumbered
\KwIn{Equal division $N$, terminal values of $\eta_{1ij}$ and $\zeta_{1ij}$: $\eta_{1ij}(N+1)$, $\zeta_{1ij}(N+1)$.}
\For{k=1 to N}{
	Calculate $\xi_{1ij}(N-k+2)$ via (\ref{Gamma 1i}).
		
	Calculate the values of all matrices in (\ref{notation-1}) and (\ref{notation-2}) at time $N-k+2$.
		
	Solve $\Pi(N-k+1)$ from equation (\ref{Pi equation}), and calculate $\Sigma(N-k+2)$.
		
	Obtain $\eta_{1ij}(N-k+1)$ and $\zeta_{1ij}(N-k+1)$ by (\ref{equality 1}), and calculate $\xi_{1ij}(N-k+1)$.
	}
\KwOut{Incentive strategy parameters $\eta^*_{1ij}$, $\zeta^*_{1ij}$, and $\xi^*_{1ij}$ for Decision-making Level 1.}
\KwIn{Terminal values of $\rho_{2ij}$: $\rho_{2ij}(N+1)$.}
\For{k=1 to N}{
	Calculate $\theta_{2ij}(N-k+2)$ via (\ref{Gamma 2ij}).
		
	Calculate the values of all matrices mentioned in subsection 3.3 at time $N-k+2$.
		
	Solve $\Phi(N-k+1)$ from equation (\ref{Phi}), and calculate $\Psi(N-k+2)$.
		
	Obtain $\rho_{2ij}(N-k+1)$, and calculate $\theta_{2ij}(N-k+1)$.
}
\KwOut{Incentive strategy parameters $\rho^*_{2ij}$, $\theta^*_{2ij}$ for Managerial Level 2.}
	Obtain the incentive strategy sets of Decision-making Level 1 and Managerial Level 2 via (\ref{Gamma*1i}) and (\ref{Gamma*2ij}), respectively.
\end{algorithm*}

\begin{Example}

The system coefficients are given as follows:
$A=0.8$, $C=1$, $E=-1$, $B_{11}=0.65$, $B_{12}=-1$, $D_{11}=0.5$, $D_{12}=1$, $B_{211}=-1$, $B_{212}=0.2$, $B_{213}=0.5$, $B_{221}=0.2$, $B_{222}=-1$, $B_{223}=0.2$, $B_{311}=0.5$, $B_{312}=1$, $B_{321}=0.5$, $B_{322}=0.2$, $B_{331}=0.5$, $B_{332}=0.1$, $D_{211}=-0.1$, $D_{212}=0.1$, $D_{213}=0.2$, $D_{221}=0.5$, $D_{222}=-2$, $D_{223}=0.5$, $D_{311}=0.2$, $D_{312}=1$, $D_{321}=0.2$, $D_{322}=0.1$, $D_{331}=0.5$, $D_{332}=0.1$, $Q_1=1$, $Q_{21}=0.8$, $Q_{22}=0.4$, $Q_{31}=Q_{33}=0.5$, $Q_{32}=0.2$, $R_1=R_2=1$, $R_{111}=R_{112}=0.5$, $R_{113}=1$, $R_{121}=0.5$, $R_{122}=1$, $R_{123}=0.5$, $\bar{R}_{111}=0.5$, $\bar{R}_{112}=0.1$, $\bar{R}_{121}=0.2$, $\bar{R}_{122}=0.1$, $\bar{R}_{131}=0.1$, $\bar{R}_{132}=0.1$, $R_{221}=1$, $R_{212}=0.8$, $R_{213}=1$, $R_{221}=0.9$, $R_{222}=R_{223}=1$, $\bar{R}_{211}=0.3$, $\bar{R}_{212}=0.2$, $\bar{R}_{221}=0.4$, $\bar{R}_{222}=0.1$, $\bar{R}_{231}=\bar{R}_{232}=0.2$, $R_{311}=R_{321}=R_{322}=R_{323}=1$, $R_{312}=0.2$, $R_{313}=0.3$,  $\bar{R}_{311}=\bar{R}_{321}=\bar{R}_{322}=\bar{R}_{332}=1$, $\bar{R}_{312}=2$, $\bar{R}_{331}=0.5$, $G_1=G_{21}=G_{22}=1$, $G_{31}=G_{33}=0.5$, $G_{32}=0.2$, $x_0=0.5$, $T=0.8$, and the disturbance attenuation level $\gamma=1$.

Consider the three-level multi-leader-follower incentive Stackelberg game  with $H_\infty$ constraint by the algorithm. We can give plots of the optimal state trajectory $\bar{x}(\cdot)$ and the team-optimal strategy  $u^*_c=\mathbf{col}\big[u^*_{11}\;u^*_{12}\;u^*_{211}\;u^*_{212}\;u^*_{213}\;u^*_{221}\;u^*_{222}\;u^*_{223}\;u^*_{311}\;u^*_{312}\;u^*_{321}\;u^*_{322}\;u^*_{331}\;u^*_{332}\;\big]$ in Figures 2 and 3, respectively.
In the meanwhile, the worst-case disturbance $v^*(\cdot)$ is plotted in Figure 4.

\begin{figure}[H]
	\centering
	\includegraphics[width=0.75\linewidth]{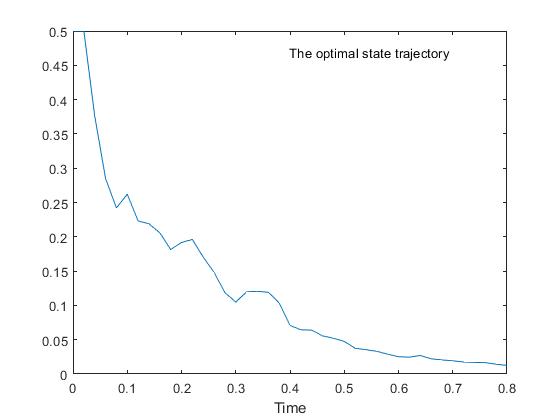}
	\caption{The optimal state trajectory $\bar{x}(\cdot)$.}
\end{figure}

\begin{figure}[H]
	\centering
	\includegraphics[width=0.75\linewidth]{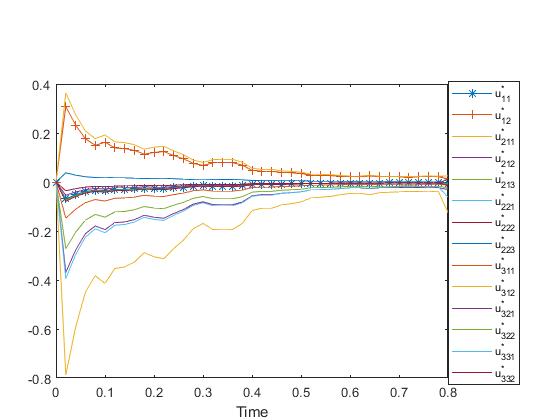}
	\caption{The team-optimal strategy $u^*_c(\cdot)$.}
\end{figure}

\begin{figure}[H]
	\centering
	\includegraphics[width=0.75\linewidth]{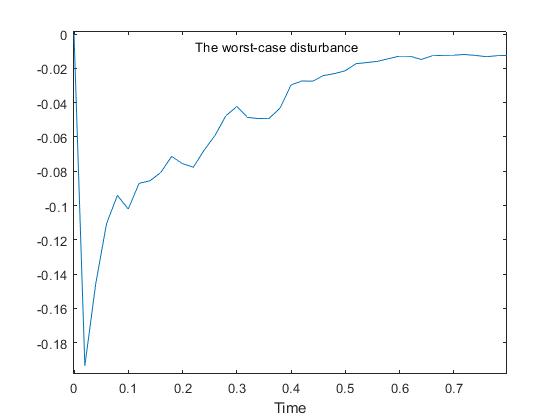}
	\caption{The worst-case disturbance $v^*(\cdot)$.}
\end{figure}	
	
 Avoiding complex calculations, we discuss only the incentive for Decision-making Level 1, Managerial Level 2 being the same as it. Set $N=40$, terminal values $\eta_{1ij}(N+1)$ and $\zeta_{1ij}(N+1)$ are given as follows:
$\eta_{111}(N+1)=5$, $\eta_{112}(N+1)=3$, $\eta_{113}(N+1)=-2$, $\eta_{121}(N+1)=-1$, $\eta_{122}(N+1)=4$, $\eta_{123}(N+1)=1$, $\zeta_{111}(N+1)=1$, $\zeta_{112}(N+1)=-1$, $\zeta_{113}(N+1)=1$, $\zeta_{121}(N+1)=1$, $\zeta_{122}(N+1)=2$, $\zeta_{123}(N+1)=-1$.
Through the algorithm above, we can derive plots of the moduli of incentive strategy parameters $\eta^*_{1ij}$, $\zeta^*_{1ij}$ and $\xi^*_{1ij}$ in Figures 5-7 under the disturbance attenuation level $\gamma=1$. (Since these parameters are complex numbers, for convenience, we only provide the images of their moduli.)

The incentive intensities are very flat in the early stages, but they increase significantly in the end to achieve the team-optimal strategy.

\begin{figure}[H]
	\centering
	\includegraphics[width=0.75\linewidth]{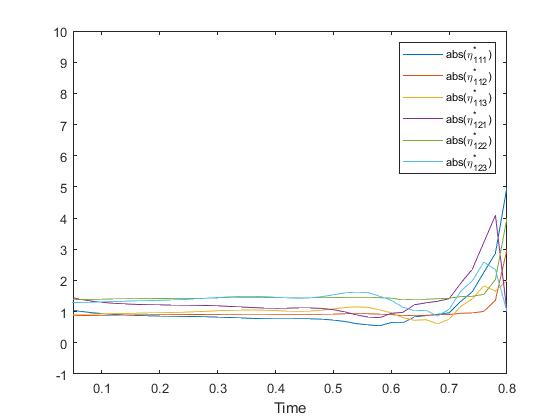}
	\caption{The moduli of incentive strategy parameters $\eta^*_{1ij}(\cdot)$.}
\end{figure}

\begin{figure}[H]
	\centering
	\includegraphics[width=0.75\linewidth]{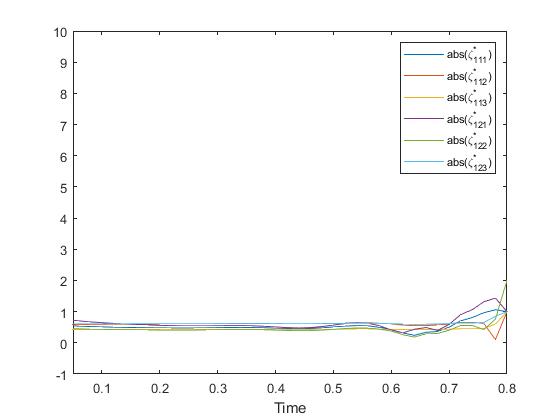}
	\caption{The moduli of incentive strategy parameters $\zeta^*_{1ij}(\cdot)$.}
\end{figure}

\begin{figure}[H]
	\centering
	\includegraphics[width=0.75\linewidth]{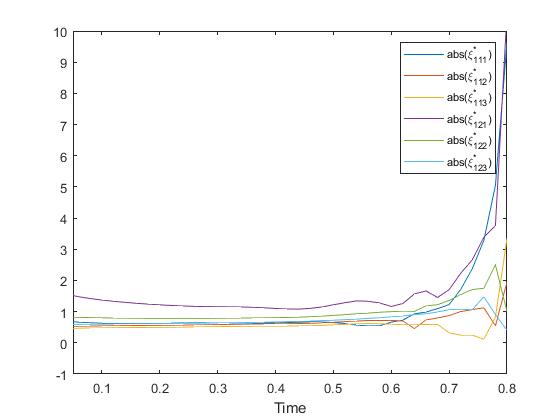}
	\caption{The moduli of incentive strategy parameters $\xi^*_{1ij}(\cdot)$.}
\end{figure}

We can investigate the impacts of the disturbance attenuation level $\gamma$ on the optimal state trajectory $\bar{x}(\cdot)$ and team-optimal strategy $u^*_c(\cdot)$. When the disturbance attenuation level $\gamma$ takes these values in $\{1,10,100\}$, the change of the worst-case disturbance $v^*(\cdot)$ is plotted in Figure 8, and Figures 9 and 10 depict the changes of the optimal state trajectory $\bar{x}(\cdot)$ and team-optimal strategy $u^*_c(\cdot)$.

In Figure 8, the higher disturbance attenuation level means that the measure of the worst possible impact of stochastic disturbance on the system that players can accept is larger. In other words, players are not conservative and remain optimistic about unknown risks. As a result, the worst-case disturbance $v^*(\cdot)$ they consider will be smaller. So, the intensity of the team-optimal strategy $u^*_c(\cdot)$ they give will also be relatively small (Figure 10).

\begin{figure}[H]
	\centering
	\includegraphics[width=0.75\linewidth]{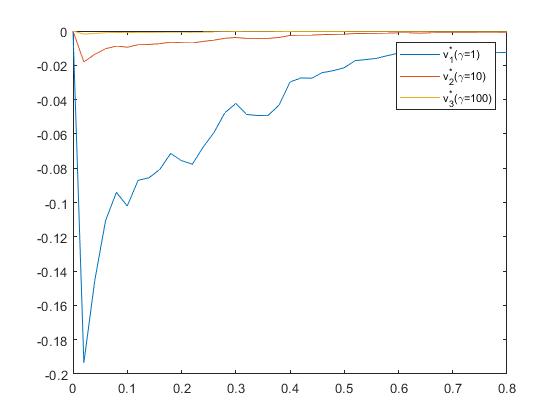}
	\caption{The impact of $\gamma$ on the worst-case disturbance $v^*(\cdot)$.}
\end{figure}

\begin{figure}[H]
	\centering
	\includegraphics[width=0.75\linewidth]{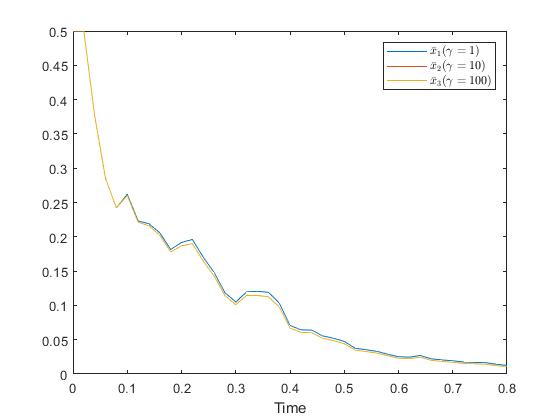}
	\caption{The impact of $\gamma$ on the optimal state trajectory $\bar{x}(\cdot)$.}
\end{figure}

\begin{figure}[H]
	\centering
	\includegraphics[width=0.75\linewidth]{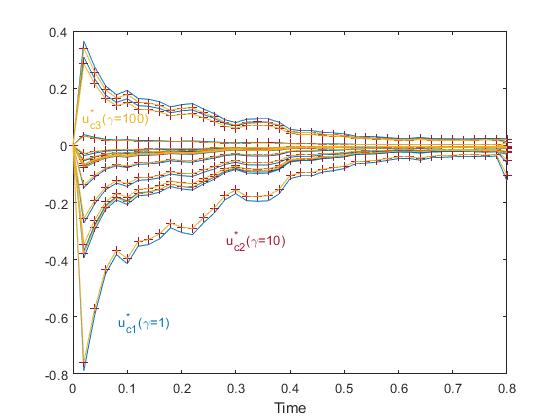}
	\caption{The impact of $\gamma$ on the team-optimal strategy $u^*_c(\cdot)$.}
\end{figure}

Finally, we consider the impacts of terminal values of $\eta_{1ij}$ and $\zeta_{1ij}$ on the moduli of incentive strategy parameters $\eta^*_{1ij}$, $\zeta^*_{1ij}$ and $\xi^*_{1ij}$. Consider the intensities (absolute values) of terminal values $\eta_{1ij}(N+1)$ and $\zeta_{1ij}(N+1)$ become 85\% of the original under the same disturbance attenuation level $\gamma=1$, which are given as follows:
$\eta_{111}(N+1)=4.25$, $\eta_{112}(N+1)=2.55$, $\eta_{113}(N+1)=-1.7$, $\eta_{121}(N+1)=-0.85$, $\eta_{122}(N+1)=3.4$, $\eta_{123}(N+1)=0.85$, $\zeta_{111}(N+1)=0.85$, $\zeta_{112}(N+1)=-0.85$, $\zeta_{113}(N+1)=0.85$, $\zeta_{121}(N+1)=0.85$, $\zeta_{122}(N+1)=1.7$, $\zeta_{123}(N+1)=-0.85$.
By the algorithm procedure, we get plots of the corresponding moduli of incentive strategy parameters $\eta^*_{1ij}$, $\zeta^*_{1ij}$ and $\xi^*_{1ij}$ in Figure 11-13.

\begin{figure}[H]
	\centering
	\includegraphics[width=0.75\linewidth]{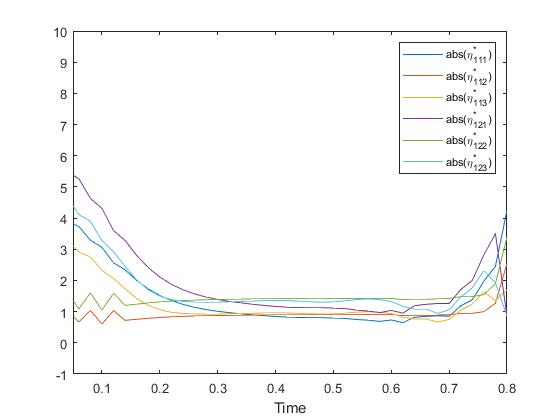}
	\caption{The impacts of $\eta_{1ij}(N+1)$ and $\zeta_{1ij}(N+1)$ on the moduli of $\eta^*_{1ij}(\cdot)$.}
\end{figure}

\begin{figure}[H]
	\centering
	\includegraphics[width=0.75\linewidth]{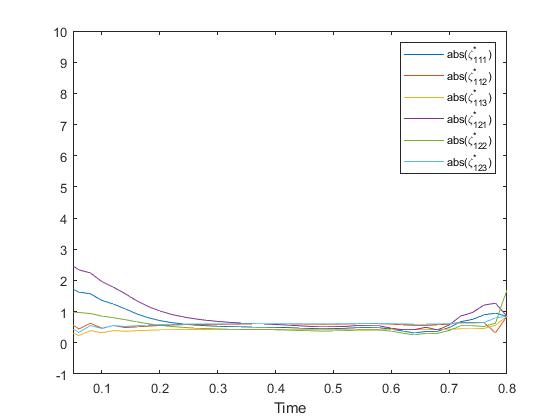}
	\caption{The impacts of $\eta_{1ij}(N+1)$ and $\zeta_{1ij}(N+1)$ on the moduli of $\zeta^*_{1ij}(\cdot)$.}
\end{figure}

\begin{figure}[H]
	\centering
	\includegraphics[width=0.75\linewidth]{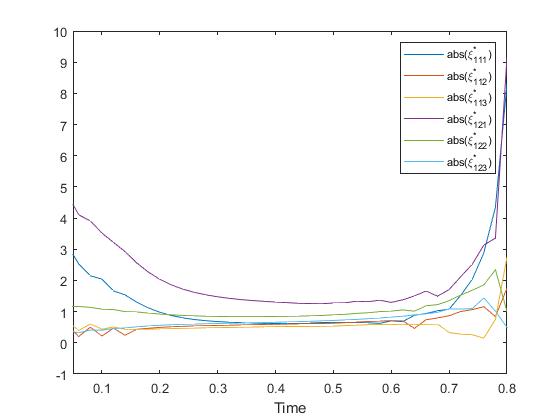}
	\caption{The impacts of $\xi_{1ij}(N+1)$ and $\xi_{1ij}(N+1)$ on the moduli of $\xi^*_{1ij}(\cdot)$.}
\end{figure}
 Compared with Figure 5-7, to make the intensities of terminal incentive parameters smaller, the incentive strategy parameters' intensities have to be larger throughout the process in order to achieve the same goal under the same system.
\end{Example}

\begin{Remark}
	We have to emphasise that the algorithm procedure of solving incentive Stackelberg equilibrium strategies depends on the terminal values of incentive strategy parameters $\eta_{1ij}$, $\zeta_{1ij}$ and $\rho_{2ij}$ to be determined, which is a drawback of the algorithm.
\end{Remark}

\section{Concluding remarks}

In this paper, we have studied a three-level multi-leader-follower incentive Stackelberg game with $H_\infty$ constraint, where the control variables and the external disturbance enter the diffusion term and drift term of the state equation, respectively. Via $H_2/H_\infty$ control theory, convex analysis theory,  maximum principle and decoupling technique, the three-level incentive Stackelberg strategy set is given. We derive sufficient conditions for the three-level incentive Stackelberg game, and show that three-level incentive matrices depend on an initial state value $x_0$ and the corresponding trajectory is equivalent to the optimal-team trajectory $\bar{x}(\cdot)$ after achieving incentive. Numerical simulations are also given.

In the future, we will consider multi-leader-follower incentive Stackelberg differential game with partial observation, where both the leader and the followers have their own observation equations. Linear-quadratic mean-field type multi-leader-follower incentive Stackelberg differential game is also an interesting and challenging topic.

\end{document}